\newtheorem{theorem}{Theorem}[section]
\newtheorem{lemma}[theorem]{Lemma}
\newtheorem{proposition}[theorem]{Proposition}
\theoremstyle{definition}
\newtheorem{remark}[theorem]{Remark}
\theoremstyle{definition}
\newtheorem{definition}[theorem]{Definition}
\theoremstyle{definition}
\newtheorem{assumption}[theorem]{Assumption}
\def\dashint{\operatorname%
{\,\,\text{\bf--}\kern-.98em\DOTSI\intop\ilimits@\!\!}}
\def\bR{\mathbb{R}}
\def\bN{\mathbb{N}}
\def\cB{\mathcal{B}}
\newcommand{\Div}{\operatorname{div}}
\title[Quasilinear conormal derivative problem]{Global regularity of solutions to quasilinear conormal derivative problem with controlled growth}
\author[D. Kim]{Doyoon Kim}
\address[D. Kim]{Department of Applied Mathematics, Kyung Hee University,
1 Seocheon-dong, Giheung-gu, Yongin-si, Gyeonggi-do 446-701 Republic of Korea}
\email{doyoonkim@khu.ac.kr}
\thanks{This work was supported by a grant from the Kyung Hee University in 2010. (KHU-20101825)}
\subjclass[2010]{35J62, 35J25, 35B65}
\keywords{quasilinear elliptic equations, conormal derivative boundary value problem, BMO coefficients, Sobolev spaces.}
\begin{document}
	
\begin{abstract}
We prove the global regularity of weak solutions to the conormal derivative boundary value problem for quasilinear elliptic equations in divergence form on Lipschitz domains under the controlled growth conditions on the low order terms.
The leading coefficients are in the class of BMO functions with small mean oscillations.
\end{abstract}

\maketitle

\section{Introduction}

We consider the conormal derivative boundary value problem 
\begin{equation}							 \label{eq3.24}
\left\{
  \begin{aligned}
    - D_i\left(A_{ij}(x,u) D_j u + a_i(x,u)\right) &= b(x,u,\nabla u)
 \quad \text{in} \quad \Omega,\\
    \left(A_{ij}(x,u) D_j u + a_i(x,u)\right) \cdot \nu(x) &= 0 \quad \text{on} \quad \partial \Omega.
  \end{aligned}
\right.
\end{equation}
Here the equation is a quasilinear elliptic equation in divergence form, $\Omega$ is a bounded Lipschitz domain in $\bR^d$, $d \ge 2$, with a small Lipschitz constant,
and $\nu(x)$ is the outward normal vector to the surface $\partial \Omega$.
We call $u\in W^1_2(\Omega)$ a weak solution to \eqref{eq3.24} if
$$
\int_\Omega (A_{ij}(x,u)D_ju+a_i(x,u)) D_i\phi\,dx=\int_\Omega b(x,u,\nabla u)\phi\,dx
$$
for any $\phi\in W^1_2(\Omega)$.

In this paper we study the global regularity of weak solutions to \eqref{eq3.24} under the {\em controlled growth conditions} on $a_i$ and $b$.
First of all, the nonlinear terms $A_{ij}(x,u)$, $a_i(x,u)$, $b(x,u,\xi)$
in \eqref{eq3.24} are of Carathe\'{o}dory type, i.e., they are measurable in $x \in \bR^d$ for all $(u,\xi) \in \bR^d$,
and continuous in $(u,\xi) \in \bR \times \bR^d$ for almost all $x \in \bR^d$.
The leading coefficients $A_{ij}$ are bounded and uniformly elliptic, that is, for some constant $\mu\in (0,1]$,
\begin{equation}
								\label{eq110409_02}
|A_{ij}| \le \mu^{-1},
\quad
A_{ij}\xi_i \xi_j \ge \mu |\xi|^2 \quad \forall\xi\in \bR^d.	
\end{equation}
We also assume that $A_{ij}(x,u)$ are uniformly continuous in $u$ and have small mean oscillations with respect to $x$.
It is well-known that functions in this class are not necessarily continuous.
Throughout the paper, we set
\begin{equation}
								\label{eq110411_04}
\gamma =
\left\{
\begin{aligned}
&\frac{2d}{d-2}, \quad d > 2,\\
&\text{any number bigger than $2$}, \quad d = 2.
\end{aligned}
\right.	
\end{equation}
By the controlled growth conditions, we mean 
$$
|a_i(x,u)| \le \mu_1 (|u|^{\lambda_1} + f),
\quad
|b(x,u,\nabla u)| \le \mu_2 (|\nabla u|^{\lambda_2} + |u|^{\lambda_3} + g)
$$
for some constants $\mu_1,\mu_2>0$, where
$\lambda_1 = \gamma/2$, $\lambda_2 = 2(1-1/\gamma)$,
$\lambda_3 = \gamma-1$, and
$$
f\in L_2(\Omega),\quad g\in L^{\frac \gamma {\gamma-1}}(\Omega).
$$
Since $u \in W_2^1(\Omega)$ implies $u \in L_\gamma(\Omega)$, the controlled growth conditions guarantee the convergence of the integrals in the definition of weak solutions above.
If $1 \le \lambda_1 < \gamma/2$, $1 \le \lambda_2 < 2(1-1/\gamma)$,
$1 \le \lambda_3 < \gamma-1$,
we say that the equation \eqref{eq3.24} satisfies the {\em strictly controlled growth conditions}.
As mentioned in \cite{DK10}, the controlled growth conditions are optimal (see, for instance, a counterexample in \cite{Pala09}) unless some additional boundedness conditions on weak solutions are imposed.

Under the above assumptions, we prove that weak solutions to \eqref{eq3.24} are
{\em globally} H\"{o}lder continuous with H\"{o}lder exponents depending only on the dimension and the integrability of $f$ and $g$.
Indeed, as noted in \cite{Pala09} and \cite{DK10},  we have an explicit description of the H\"{o}lder exponent in terms of $\sigma$ and $\tau$ if $f \in L_\sigma(\Omega)$ and $g \in L_\tau(\Omega)$, $\sigma > d$, $\tau > d/2$, whereas such an explicit H\"{o}lder exponent is not shown in the De Giorgi-Moser-Nash theory.
To obtain the desired regularity, we prove higher integrability of solutions. 
Precisely, we show that a weak solution to \eqref{eq3.24} is a member of $W_p^1(\Omega)$,
where $p>d$ is determined only by $\sigma$ and $\tau$ above.
Then the globally H\"{o}lder continuity of the weak solution follows easily from the Sobolev embedding theorem.
In addition to the fact that the low order terms satisfy the controlled growth conditions, note that in this paper the leading coefficients satisfy only a small BMO condition as functions of $x\in \bR$. Thus they are not necessarily uniformly continuous functions in $x$.
We remark that in general global regularity cannot be expected for systems (see \cite{Gi_ex,StJoMa86}), and even for partial regularities usually one requires the leading coefficients to possess certain regularity in all involved variables (usually uniform continuity).

With the controlled growth conditions, conormal derivative boundary value problems (in other words, Neumann boundary value problems) for quasilinear equations/systems in divergence form have been studied in Arkhipova's papers \cite{Arkh93, Arkh94, Arkh95} (also see the references therein) where she proved reverse H\"{o}lder inequalities and partial regularities of weak solutions up to the boundary. 
In this paper, using a reverse H\"{o}lder inequality as well as $L_p$-theory for linear equations, we show that weak solutions are indeed H\"{o}lder continuous up to the boundary
if the given quasilinear equation has appropriate regularity (not necessarily continuous) on the leading coefficients and the domain is Lipschitz.

When the Dirichlet boundary condition is imposed, Dong and the author established in \cite{DK10} the global H\"{o}lder continuity of weak solutions to equations as in \eqref{eq3.24}
with the same controlled growth conditions when the boundary condition is zero. 
This paper continues to investigate the same type of quasilinear equations, but the boundary condition is of the Neumann type.
That is, we deal with quasilinear divergence type equations with zero conormal derivative boundary value.

In \cite{DK10} we first proved reverse H\"{o}lder inequalities for weak solutions to elliptic and parabolic quasilinear equations, which give slightly better integrability of weak solutions. 
Specifically, for example, a weak solution $u \in W_2^1(\Omega)$ to the elliptic quasilinear equation turns out to be in $W_p^1(\Omega)$ for $p>2$.
The exponent $p$ may not be sufficiently large to give a H\"{o}lder continuity of weak solutions via the Sobolev embedding theorem.  
However, the fact that $p>2$ is enough to give the boundedness and H\"{o}lder continuity of weak solutions by making use of relatively well-known results on divergence type quasilinear equations with zero boundary condition (see \cite{LU, LSU}).
Here the H\"{o}lder continuity is for a uniform continuity of weak solutions, but is not necessarily strong enough to give the desired optimal H\"{o}lder regularity of solutions.
Then using $L_p$-estimates for linear equations, we derive an iteration of $L_p$-estimates, which increases the exponent $p$ until we have sufficient integrability of solutions guaranteeing the global optimal H\"{o}lder regularity of solutions.
As noted in \cite{DK10}, since the reverse H\"{o}lder inequalities are not available for the Dirichlet boundary value problems, in \cite{DK10} we first had to prove reverse H\"{o}lder inequalities for quasilinear elliptic and parabolic equations under the controlled growth conditions.

As to the conormal derivative boundary value problems for quasilinear equations under the controlled growth conditions, as noted above, reverse H\"{o}lder inequalities have already been investigated in \cite{Arkh93, Arkh94, Arkh95} for elliptic and parabolic systems with non-zero conormal derivative boundary values.
Thus, in this paper we concentrate more on the necessary boundedness of solutions as well as a H\"{o}lder continuity for a uniform continuity of solutions.
In fact, similar boundedness and H\"{o}lder continuity results can be found in 
\cite{LU} and \cite{Lieb83}  with possibly different growth conditions.
In particular, \cite[Chapter 10]{LU} shows a H\"{o}lder continuity using a boundary flattening argument when the domain is $C^{1,1}$.
Recently, Winkert studied in \cite{Wink10} the boundedness of weak solutions to quasilinear elliptic equations satisfying natural growth conditions with a conormal derivative boundary condition. The growth conditions correspond to the case with $\lambda_i = 1$ above if weak solutions are in $W_2^1(\Omega)$.
Winkert and Zacher treated in \cite{WZ11} the global boundedness of weak solutions to
the conormal derivative problem for nonlinear elliptic equations where their nonstandard growth conditions cover the {\em strictly} controlled growth conditions. 

We prove the boundedness of weak solutions by making use of the reverse H\"{o}lder inequality (Theorem \ref{thm1}). It is essential to have $u \in W_p^1(\Omega)$, $p > 2$, for a weak solution $u \in W_2^1(\Omega)$ in order to prove the boundedness when the quasilinear equation satisfies the controlled growth conditions. The lines of the proof for the boundedness are based on De Giorgi's iteration technique and similar to those in \cite{LU, Lieb83, WZ11}.
Then we prove a H\"{o}lder continuity of weak solutions by following the argument in \cite{LU}.
Finally, we apply $L_p$-theory, developed in \cite{DongKim08a, DK09}, for linear elliptic equations with conormal derivative boundary conditions when the leading coefficients have small mean oscillations.

We remark that the iteration argument for the repeated use of $L_p$-estimates was previously used by Palagachev in \cite{Pala09}, where he derived the global H\"{o}lder regularity of solutions, as in this paper, by proving higher integrability of solutions. 
The equations considered in \cite{Pala09} are quasilinear elliptic equations with the Dirichlet boundary condition under the strictly controlled growth conditions, and the leading coefficients are in the class of vanishing mean oscillations (VMO). 
Also see \cite{Pala10} and \cite{PalSof11}, where the global H\"{o}lder regularity of solutions to Dirichlet problems on Reifenberg flat domains is discussed when the leading coefficients have small mean oscillations. In \cite{Pala10} the strictly controlled growth conditions are imposed and the existence of solutions is also discussed. In \cite{PalSof11} the controlled growth conditions are imposed on quasilinear elliptic and parabolic equations.

As a final remark, we refer the reader to the paper \cite{DK10} and references therein for more information about various growth conditions and the (partial) regularity of weak solutions to divergence type elliptic and parabolic equations/systems. 

This paper is organized as follows. In Section \ref{main} we introduce our assumptions and main results of this paper.
Then we obtain the boundedness and H\"{o}lder continuity of solutions in Sections \ref{sec3}
and \ref{sec4}, respectively.
In Section \ref{sec5} we present some $L_p$-theory for linear equations which is necessary in the proof of Theorem \ref{thm2} in Section \ref{pfmainth}.
Section \ref{HolderReg} is an independent section describing a function class, functions in which satisfy H\"{o}lder continuity.
Section \ref{elliptic-sec} is devoted to the reverse H\"{o}lder inequality.

\section{Main results}
							\label{main}

For a given function $u=u(x)$ defined on $\Omega \subset \bR^d$, we use
$D_i u$ for $\partial u/\partial x^i$.
For $\alpha \in (0,1]$, we define
\[
|u|_{\alpha,\Omega}=|u|_{0,\Omega}+[u]_{\alpha,\Omega}: = \sup_{x\in \Omega}\,|u(x)|+\sup_{\substack{x, y \in \Omega\\ x\neq y}} \frac{|u(x)-u(y)|}{|x-y|^\alpha}.
\]
By $C^{\alpha}(\Omega)$ we denote the set of all bounded measurable functions $u$ on $\Omega$ for which $|u|_{\alpha,\Omega}$ is finite.
We write $N(d,p,\cdots)$ if $N$ is a constant depending only
on the prescribed quantities $d, p,\cdots$.
Throughout the paper, the domain $\Omega$ satisfies the following Lipschitz condition, where the constant $\beta$ will be specified later. Unless specified otherwise, $\Omega$ is always bounded.

\begin{assumption}[$\beta$]
                                    \label{assump2}
There is a constant $R_0\in (0,1]$ such that, for any $x_0\in \partial\Omega$ and $r\in(0,R_0]$, there exists a Lipschitz
function $\varphi$: $\bR^{d-1}\to \bR$ such that
$$
\Omega\cap B_r(x_0) = \{x \in B_r(x_0)\, :\, x_1 >\varphi(x')\}
$$
and
$$
\sup_{x',y'\in B_r'(x_0'),x' \neq y'}\frac {|\varphi(y')-\varphi(x')|}{|y'-x'|}\le \beta
$$
in an appropriate coordinate system.
\end{assumption}

Let us recall the controlled growth conditions on the lower order terms:
\begin{equation}
								\label{eq110409_01}
|a_i(x,u)| \le \mu_1 (|u|^{\gamma/2} + f),
\quad
|b(x,u,\nabla u)| \le \mu_2 (|\nabla u|^{2(1-1/\gamma)} + |u|^{\gamma-1} + g),	
\end{equation}
where $\mu_1$, $\mu_2$, $\mu_3>0$ are some constants, $\gamma$ is defined as in \eqref{eq110411_04}, and
$$
f\in L_2(\Omega),\quad g\in L_{\frac \gamma {\gamma-1}}(\Omega).
$$

\begin{theorem}[Reverse H\"older inequality]
                                        \label{thm1}
Let $u\in W^1_2(\Omega)$ be a weak solution to \eqref{eq3.24}. Suppose in addition that $f\in L_{\sigma}(\Omega)$ and $g\in L_{\tau}(\Omega)$ for some $\sigma\in (2,\infty)$ and $\tau\in (\gamma/(\gamma-1),\infty)$. Then there exists $p>2$ depending only on $d$, $\mu$, $\mu_1$,
$\mu_2$, $\gamma$, $u$, and $\beta$, such that
$$
\|u\|_{L_{\gamma p/2}(\Omega)} + \|Du\|_{L_p(\Omega)}\le N,
$$
where $N = N(d,\mu,\mu_1,\mu_2,\sigma,\tau, \gamma, u, \|f\|_{L_\sigma(\Omega)},
\|g\|_{L_\tau(\Omega)}, \beta, R_0, \text{\rm diam}\Omega)$.
\end{theorem}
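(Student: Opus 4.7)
The plan is to establish, for balls $B_r(x_0)$ with $x_0\in\overline{\Omega}$ and sufficiently small $r$, a local reverse H\"older inequality
\[
\dashint_{B_{r/2}(x_0)\cap\Omega}|\nabla u|^2
\le C\Bigl(\dashint_{B_r(x_0)\cap\Omega}|\nabla u|^{2_*}\Bigr)^{2/2_*}
+ C\dashint_{B_r(x_0)\cap\Omega}\bigl(|u|^\gamma+f^2+g^{\gamma/(\gamma-1)}\bigr),
\]
where $2_*=2d/(d+2)<2$, and then to apply Gehring's self-improving lemma to conclude $\nabla u\in L_p(\Omega)$ for some $p>2$. The scheme parallels the Dirichlet analogue in \cite{DK10}, but is technically simpler near $\partial\Omega$: since the conormal condition in \eqref{eq3.24} makes the boundary term in the weak formulation vanish, admissible test functions need not vanish on $\partial\Omega$, so the same cutoff construction serves at interior and boundary points.

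The first ingredient is a Caccioppoli-type estimate. For $x_0\in\overline{\Omega}$ and $\rho<R$, we pick a cutoff $\eta$ with $\eta\equiv 1$ on $B_\rho(x_0)$, $\operatorname{supp}\eta\subset B_R(x_0)$, $|\nabla\eta|\le C/(R-\rho)$, and set $c=(u)_{B_R(x_0)\cap\Omega}$. Testing the weak form of \eqref{eq3.24} with $\phi=(u-c)\eta^2$, expanding $D_i\phi=\eta^2 D_i u+2\eta(u-c)D_i\eta$, and using the ellipticity \eqref{eq110409_02}, the controlled growth bounds \eqref{eq110409_01}, and Young's inequality to absorb small multiples of $\int|\nabla u|^2\eta^2$, one obtains
\[
\int_{B_\rho\cap\Omega}|\nabla u|^2
\le \frac{C}{(R-\rho)^2}\int_{B_R\cap\Omega}|u-c|^2
+ C\int_{B_R\cap\Omega}\bigl(|u-c|^\gamma+|u|^\gamma+f^2+g^{\gamma/(\gamma-1)}\bigr),
\]
the $|u-c|^\gamma$ and $|u|^\gamma$ terms arising from $|b||u-c|$ and $|a_i||u-c|$ after Young's inequality with exponents $\gamma/(\gamma-1)$ and $\gamma$. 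Applying Sobolev--Poincar\'e on $B_R\cap\Omega$---with constants uniform in $x_0$ and $R$ by Assumption~\ref{assump2}---at the exponent $2_*$ yields $\int_{B_R\cap\Omega}|u-c|^2\le C(\int_{B_R\cap\Omega}|\nabla u|^{2_*})^{2/2_*}$, which is the source of the self-improving lower exponent, and at the exponent $\gamma=2^*$ yields $\int_{B_R\cap\Omega}|u-c|^\gamma\le C(\int_{B_R\cap\Omega}|\nabla u|^2)^{\gamma/2}$, where a scaling count using $\gamma=2d/(d-2)$ makes the explicit $R$-prefactor disappear.

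The principal obstacle is this super-critical term. We rewrite
\[
\Bigl(\int_{B_R\cap\Omega}|\nabla u|^2\Bigr)^{\gamma/2}
=\Bigl(\int_{B_R\cap\Omega}|\nabla u|^2\Bigr)^{\gamma/2-1}\int_{B_R\cap\Omega}|\nabla u|^2,
\]
with $\gamma/2-1=2/(d-2)>0$. Since $|\nabla u|^2\in L_1(\Omega)$ is absolutely continuous as a measure, we may choose a threshold $r_0=r_0(u)>0$ so that for every $x_0\in\overline{\Omega}$ and every $R\le r_0$ the prefactor $C(\int_{B_R\cap\Omega}|\nabla u|^2)^{\gamma/2-1}$ is at most $1/2$; this is precisely why the final constant must depend on $u$ itself and not only on $\|u\|_{W^1_2(\Omega)}$. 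A Widman-type iteration on intermediate radii $\rho<R\le r_0$ then absorbs the super-critical contribution, and after normalizing by $|B_r|$ one recovers the reverse H\"older inequality displayed at the outset. Feeding this into the standard Gehring self-improving lemma gives $\nabla u\in L_p(\Omega)$ for some $p>2$: the sources $f^2\in L_{\sigma/2}(\Omega)$ and $g^{\gamma/(\gamma-1)}\in L_{\tau(\gamma-1)/\gamma}(\Omega)$ have exponent strictly greater than $1$ by the hypotheses $\sigma>2$ and $\tau>\gamma/(\gamma-1)$, and this, combined with $|u|^\gamma\in L_1(\Omega)$, is enough for Gehring to yield a genuine gain. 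The accompanying bound on $\|u\|_{L_{\gamma p/2}(\Omega)}$ then follows from the Sobolev embedding $W^1_p(\Omega)\hookrightarrow L_{dp/(d-p)}(\Omega)$ on Lipschitz domains, since $dp/(d-p)\ge\gamma p/2$ for $p$ close to $2$.
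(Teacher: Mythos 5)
Your proposal is correct and follows essentially the same route as the paper: the same test function $(u-c)\eta^2$, the same Caccioppoli-plus-Sobolev--Poincar\'e estimate on Lipschitz domains (uniform at interior and boundary points thanks to the conormal condition), the same factorization $\bigl(\int|\nabla u|^2\bigr)^{\gamma/2}=\bigl(\int|\nabla u|^2\bigr)^{\gamma/2-1}\int|\nabla u|^2$ with absolute continuity supplying the small prefactor, and Gehring's lemma to conclude. The only cosmetic differences are that the paper leaves the super-critical term with its small coefficient in the reverse H\"older inequality and lets Giaquinta's Proposition V.1.1 absorb it (carrying $|u|^\gamma$ alongside $|\nabla u|^2$ so the $L_{\gamma p/2}$ bound comes out of Gehring directly), whereas you absorb it beforehand by iteration and recover the $L_{\gamma p/2}$ bound from the Sobolev embedding; both are standard and valid.
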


This is proved in \cite{Arkh93} for $d > 2$. Also see \cite{Arkh95} for a linear case with $d > 2$.
For reader's convenience, we give the key proposition (Proposition \ref{prop2}) in Section \ref{elliptic-sec} which readily implies the theorem including the case $d=2$.
As in \cite{Arkh93}, Theorem \ref{thm1} is true for elliptic systems under the same conditions.

To get the optimal global regularity for the equation \eqref{eq3.24}, we need a few more assumptions.
Let
$$
A^{\#}_R = \sup_{1 \le i,j \le d}\sup_{\substack{x_0 \in \bR^d\\ z_0 \in \bR, r \le R}}\dashint_{B_r(x_0)}\dashint_{B_r(x_0)} |A_{ij}(x,z_0) - A_{ij}(y,z_0)| \, dx \, dy.
$$
The following assumption indicates that $A_{ij}(x,\cdot)$ have small mean oscillations as functions of $x\in\bR^d$.

\begin{assumption}[$\rho$]
								\label{SMO}
There is a constant $R_1 \in (0,1]$ such that $A^{\#}_{R_1} \le \rho$.
\end{assumption}

We also need a continuity assumption on $A_{ij}(\cdot,z)$ as functions of $z\in\bR$.

\begin{assumption}
								\label{Aconti}
There exists a continuous nonnegative function $\omega(r)$ defined on $[0,\infty)$
such that $\omega(0) = 0$ and
$$
|A_{ij}(x_0,z_1) - A_{ij}(x_0,z_2)|
\le \omega\left(|z_1 - z_2|\right)
$$
for all $x_0 \in \bR^d$
and $z_1, z_2 \in \bR$.
\end{assumption}

Set
$$
q^* = \left\{
\begin{aligned}
&\frac{qd}{d-q} \quad &\text{if} \quad q < d,\\
&\text{arbitrary large number} > 1 \quad &\text{if} \quad q \ge d.	
\end{aligned}
\right.
$$
Note that if $q<d$, then $1/q^*=1/q-1/d$. We now state the main result of this paper.

\begin{theorem}[Optimal global regularity]
                                        \label{thm2}
Let $u\in W^1_2(\Omega)$ be a weak solution to \eqref{eq3.24}. Suppose in addition that $f\in L_{\sigma}(\Omega)$ and $g\in L_{\tau}(\Omega)$ for some $\sigma\in (d,\infty)$ and $\tau\in (d/2,\infty)$.
Then there exist positive $\beta = \beta(d, \mu, \sigma, \tau)$
and $\rho = \rho(d,\mu, \sigma, \tau)$ such that,
under Assumption \ref{assump2}($\beta$) and Assumption \ref{SMO}($\rho$),
we have
\begin{equation}
								\label{eq110405_01}
\|u\|_{W^1_p(\Omega)} \le N,
\quad
\text{where}
\quad
p=\min\{\sigma,\tau^*\}>d	
\end{equation}
and
$N=N(d,\mu,\mu_1,\mu_2,\sigma,\tau, \gamma, u, \|f\|_{L_\sigma(\Omega)},
\|g\|_{L_\tau(\Omega)}, R_1, \omega, R_0, \text{\rm diam}\Omega)$.
Consequently, we have $u\in C^\alpha(\overline\Omega)$ for $\alpha=1-d/p$.
\end{theorem}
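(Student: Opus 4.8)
The plan is to recast \eqref{eq3.24} as a \emph{linear} conormal derivative problem whose leading coefficient has small mean oscillations in $x$, and then to bootstrap the integrability of $Du$ by iterating the $L_p$-theory of Section \ref{sec5}. First, by Theorem \ref{thm1} the weak solution $u\in W^1_2(\Omega)$ in fact lies in $W^1_{p_1}(\Omega)$ for some $p_1>2$, with $\|u\|_{L_{\gamma p_1/2}(\Omega)}+\|Du\|_{L_{p_1}(\Omega)}$ bounded in terms of the data. Having $u\in W^1_{p_1}(\Omega)$ with $p_1>2$ is precisely what makes De Giorgi's iteration go through under the controlled growth conditions \eqref{eq110409_01}; carried out in Section \ref{sec3}, it yields $u\in L_\infty(\Omega)$. (Here the conormal, rather than Dirichlet, boundary condition is convenient, since $(u-k)^+$ is an admissible test function with no trace constraint; the boundary contributions in the iteration are absorbed using the small Lipschitz constant $\beta$.) With $u$ bounded, the argument of Section \ref{sec4}, following \cite{LU}, produces $u\in C^{\alpha_0}(\overline\Omega)$ for some $\alpha_0\in(0,1)$, where $\alpha_0$ and $[u]_{\alpha_0,\Omega}$ depend only on the data and not on $\rho$.

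Next I would freeze the leading coefficient along $u$: extend $u$ to $\bR^d$ with comparable H\"older seminorm and set $\bar A_{ij}(x):=A_{ij}(x,u(x))$, $F_i(x):=-a_i(x,u(x))$, and $B(x):=b(x,u(x),\nabla u(x))$. By \eqref{eq110409_02} the matrix $(\bar A_{ij})$ is bounded and uniformly elliptic with the same $\mu$, and the weak formulation of \eqref{eq3.24} reads
$$
\int_\Omega \bar A_{ij}(x)\,D_ju\,D_i\phi\,dx=\int_\Omega F_i\,D_i\phi\,dx+\int_\Omega B\,\phi\,dx\qquad\text{for all }\phi\in W^1_2(\Omega);
$$
that is, $u$ is a weak solution of the linear conormal derivative problem with leading coefficient $\bar A_{ij}$ and right-hand side $D_iF_i+B$. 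The essential observation is that $\bar A_{ij}$ has small mean oscillations: on a ball $B_r(x_0)$ with $r\le R_1$, inserting $A_{ij}(\cdot,u(x_0))$ and using Assumption \ref{SMO}, Assumption \ref{Aconti}, and the bound $\operatorname{osc}_{B_r(x_0)}u\le[u]_{\alpha_0,\Omega}(2r)^{\alpha_0}$ gives
$$
\dashint_{B_r(x_0)}\dashint_{B_r(x_0)}|\bar A_{ij}(x)-\bar A_{ij}(y)|\,dx\,dy\le\rho+2\,\omega\!\left([u]_{\alpha_0,\Omega}(2r)^{\alpha_0}\right).
$$
Given the smallness threshold $\rho_0=\rho_0(d,\mu,\min\{\sigma,\tau^*\})$ required by the linear $L_p$-theory, taking $\rho\le\rho_0/2$ and restricting to scales $r\le r_*=r_*(\omega,\alpha_0,[u]_{\alpha_0,\Omega},\rho_0)$ forces the right-hand side above to be $\le\rho_0$; the $L_p$-estimates then apply in their localized form on the bounded domain $\Omega$, the constant acquiring an extra dependence on $R_0$, $R_1$, $r_*$, and $\operatorname{diam}\Omega$. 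This is the point at which $\rho$ (and likewise $\beta$) must be chosen small in terms of $\sigma$ and $\tau$, through $p=\min\{\sigma,\tau^*\}$.

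Finally I would iterate. Assume $u\in W^1_{p_k}(\Omega)$ with $2<p_k\le p$. Since $u\in L_\infty(\Omega)$ and $\Omega$ is bounded, \eqref{eq110409_01} gives $|F_i|\le\mu_1(\|u\|_{L_\infty}^{\gamma/2}+|f|)$, so $F\in L_\sigma(\Omega)$, and $|B|\le\mu_2(|\nabla u|^{\lambda_2}+\|u\|_{L_\infty}^{\gamma-1}+|g|)$ with $\lambda_2=2(1-1/\gamma)$, so $B\in L_{m_k}(\Omega)$, where $m_k:=\min\{p_k/\lambda_2,\tau\}>1$ because $p_k>2>\lambda_2$ and $\tau>d/2\ge1$; hence $B\in W^{-1}_{m_k^*}(\Omega)$ by the dual Sobolev embedding. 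Applying the $L_{p_{k+1}}$-estimate of Section \ref{sec5} to the linear problem above with $p_{k+1}:=\min\{\sigma,m_k^*\}$ gives $u\in W^1_{p_{k+1}}(\Omega)$, with $\|u\|_{W^1_{p_{k+1}}(\Omega)}$ controlled by $\|F\|_{L_{p_{k+1}}(\Omega)}$, $\|B\|_{W^{-1}_{p_{k+1}}(\Omega)}$, and $\|u\|_{W^1_2(\Omega)}$. A short computation with $\lambda_2=2(1-1/\gamma)$ and $\gamma$ as in \eqref{eq110411_04} shows $m_k^*>p_k$ whenever $m_k=p_k/\lambda_2$ (for $d>2$, $\lambda_2=(d+2)/d$ and $m_k^*=p_kd/(d+2-p_k)>p_k$ since $p_k>2$; the case $d=2$ is similar), and that the ratio $m_k^*/p_k$ stays bounded below by a constant $>1$ once $p_k\ge p_1$. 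Hence $p_1<p_2<\cdots$ increases at least geometrically until $m_k=\tau$, i.e.\ until $p_k\ge\lambda_2\tau$, at which stage $p_{k+1}=\min\{\sigma,\tau^*\}=p$. Thus after finitely many steps $u\in W^1_p(\Omega)$ with $p=\min\{\sigma,\tau^*\}>d$, which is \eqref{eq110405_01}; the Sobolev embedding $W^1_p(\Omega)\hookrightarrow C^{1-d/p}(\overline\Omega)$ for the Lipschitz domain $\Omega$ then gives $u\in C^\alpha(\overline\Omega)$ with $\alpha=1-d/p$.

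The main obstacle is the middle step: verifying that the frozen coefficient $\bar A_{ij}(x)=A_{ij}(x,u(x))$ has small enough BMO to feed into the linear $L_p$-theory. This forces one to combine three separate ingredients --- the small $x$-oscillation Assumption \ref{SMO}, the modulus of continuity in $z$ of Assumption \ref{Aconti}, and the a priori H\"older continuity of $u$ from Section \ref{sec4} --- and, since the resulting smallness holds only at small scales, to use the $L_p$-estimates in localized form; this coupling is exactly what ties the thresholds $\beta$ and $\rho$ to $d$, $\mu$, $\sigma$, and $\tau$. The a priori boundedness and H\"older continuity of $u$ (Sections \ref{sec3}, \ref{sec4}) are themselves substantial, but proceed along standard De Giorgi / \cite{LU} lines, their only nonstandard input being the extra integrability $u\in W^1_{p_1}$, $p_1>2$, supplied by the reverse H\"older inequality.
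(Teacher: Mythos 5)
Your proposal follows essentially the same route as the paper's proof: Theorem \ref{thm1} gives $u\in W^1_{p_0}(\Omega)$ for some $p_0>2$; Section \ref{sec3} then gives $\|u\|_{L_\infty}\le M$ by De Giorgi iteration; Section \ref{sec4} gives global H\"older continuity; one freezes the leading coefficient along (an extension of) $u$, verifies it has small mean oscillation by combining Assumptions \ref{SMO} and \ref{Aconti} with the H\"older bound on $u$; and one bootstraps via the linear $L_p$-theory of Section \ref{sec5} with the recursion $p_{k+1}=\min\{\sigma,(\tfrac{\gamma}{2(\gamma-1)}p_k)^*,\tau^*\}$, showing the increments are bounded below until $p_{k_0}=\min\{\sigma,\tau^*\}$, exactly as in the paper (your $m_k=\min\{p_k/\lambda_2,\tau\}$ is the paper's $q_k$).

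One small inaccuracy worth flagging: you take the BMO smallness threshold to depend on $\min\{\sigma,\tau^*\}$. That is not enough, because the linear solvability result (Theorem \ref{liRes}) must be applied at every intermediate exponent $p_k$ and for $h_i\in L_\sigma$, $h\in L_{q_k}$ with $q_k^*$ ranging up to $\tau^*$; so one needs a threshold valid on the whole interval $[\hat p/(\hat p-1),\hat p]$ with $\hat p=\max\{\sigma,\tau^*\}$, which is exactly how the paper fixes $\beta$ and $\rho$ in \eqref{eq0223}. (If $\sigma>\tau^*$ your choice fails the requirement $\sigma\le\hat p$; if $\sigma<\tau^*$ the intermediate $q_k^*$ can exceed $\min\{\sigma,\tau^*\}$.) This is a one-line fix and your overall conclusion that $\beta,\rho$ depend only on $d,\mu,\sigma,\tau$ stands. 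Everything else — including the remark about localizing to small scales $r\le r_*$ so that $2\omega(Nr^{\alpha_0})+A^\#_r\le\rho_0$, which corresponds to the paper's choice of $R_2\le R_1$ in \eqref{eq0224} — matches the paper's argument.
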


\section{Boundedness of solutions under controlled growth conditions}
							\label{sec3}

In the proof of Theorem \ref{thm2} it is essential to have a H\"{o}lder continuity of weak solutions to \eqref{eq3.24}.
To achieve this,
in this section we prove that the weak solutions are globally bounded.

\begin{lemma}
								\label{lem20110217}
Under the conditions \eqref{eq110409_02} and \eqref{eq110409_01} with $f\in L_{\sigma}(\Omega)$ and $g\in L_{\tau}(\Omega)$ for some $\sigma\in (d,\infty)$ and $\tau\in (d/2,\infty)$, we have
\begin{equation}
								\label{eq2011021702}
\left(A_{ij}\xi_j + a_i\right)\xi_i \ge \frac{\mu}2|\xi|^2 - N |u|^\gamma - N |u|^2 \psi(x),	
\end{equation}
\begin{equation}
								\label{eq2011021501}
|b(x,u,\xi)u| \le \frac{\mu}4 |\xi|^2 + N |u|^\gamma + N |u|^2 \psi(x)	
\end{equation}
for $\xi \in \bR^d$ and $|u| \ge 1$, where $\psi \in L_q(\Omega)$, $q:=\min\{\sigma/2, \tau\} > \frac d 2$, and $N=N(\mu,\mu_1,\mu_2)$. 
\end{lemma}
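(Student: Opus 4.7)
The plan is to prove both inequalities by combining ellipticity (for the first) and the controlled growth bounds (for both) with judicious applications of Young's inequality, and then use the hypothesis $|u|\ge 1$ to repackage the lower-order data into a single weight $\psi$.

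For \eqref{eq2011021702}, I would start from the pointwise ellipticity bound $A_{ij}\xi_j\xi_i \ge \mu|\xi|^2$ and then estimate $|a_i\xi_i|$ via Cauchy--Schwarz using the controlled growth condition $|a_i| \le \mu_1(|u|^{\gamma/2}+f)$. This gives two cross terms, $\mu_1 |u|^{\gamma/2}|\xi|$ and $\mu_1 f|\xi|$, to which I would apply Young's inequality in the form $ab \le \varepsilon a^2 + (4\varepsilon)^{-1} b^2$ with $\varepsilon = \mu/4$ in each. The first cross term yields $\tfrac{\mu}{4}|\xi|^2 + N|u|^\gamma$, and the second yields $\tfrac{\mu}{4}|\xi|^2 + N f^2$. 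Summing with the ellipticity bound gives the desired lower bound with $N f^2$ in place of $N|u|^2\psi$.

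For \eqref{eq2011021501}, I would use the growth bound $|b|\le \mu_2(|\nabla u|^{2(1-1/\gamma)}+|u|^{\gamma-1}+g)$ and multiply by $|u|$ term by term. The $|u|^{\gamma-1}\cdot|u|$ product directly gives $|u|^\gamma$. The key nontrivial term is $|\xi|^{2(1-1/\gamma)}|u|$; here Young's inequality with conjugate exponents $\gamma/(\gamma-1)$ and $\gamma$ (so that $2(1-1/\gamma)\cdot\gamma/(\gamma-1)=2$) produces $\tfrac{\mu}{8}|\xi|^2 + N|u|^\gamma$. For the remaining term $g|u|$, I would write $g|u|\le \tfrac12 g^2 + \tfrac12 |u|^2$, or more directly note that since $|u|\ge 1$ we have $|u|\le |u|^2$, hence $g|u|\le g|u|^2$.

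The final step is to absorb $Nf^2$ (from the first inequality) and $Ng$ or $Ng^2+N|u|^2$ (from the second) into a single term of the form $N|u|^2\psi(x)$. Because $|u|\ge 1$, the bound $f^2\le |u|^2 f^2$ is immediate, and likewise $g\le |u|^2 g$ (or $g^2\le |u|^2 g^2$). Setting $\psi := f^2+g$ (the cleanest choice) yields $\psi\in L_{\sigma/2}+L_{\tau}\subset L_q$ on the bounded domain $\Omega$, where $q=\min\{\sigma/2,\tau\}$, and the hypotheses $\sigma>d$, $\tau>d/2$ give $q>d/2$ as required. There is no genuine obstacle here; the only point requiring a little care is keeping track of the various constants so that the $|\xi|^2$ contributions from the applications of Young's inequality add up to at most $\tfrac{\mu}{2}|\xi|^2$ in \eqref{eq2011021702} and $\tfrac{\mu}{4}|\xi|^2$ in \eqref{eq2011021501}, which is arranged simply by choosing the Young parameters small enough.
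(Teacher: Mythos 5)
Your proposal is correct and follows essentially the same route as the paper: Young's inequality on the cross terms with $\varepsilon$ tuned to $\mu$, the conjugate pair $\gamma,\ \gamma/(\gamma-1)$ for the term $|u|\,|\xi|^{2(1-1/\gamma)}$, the use of $|u|\ge 1$ to write $f^2\le |u|^2f^2$ and $g\le |u|^2 g$, and the choice $\psi=|f|^2+g\in L_q$, $q=\min\{\sigma/2,\tau\}>d/2$. One caution: stick with the direct bound $g|u|\le g|u|^2$ rather than your alternative $g|u|\le \tfrac12 g^2+\tfrac12|u|^2$, since the latter would force $g^2$ into $\psi$ and $g^2\in L_{\tau/2}$ need not lie in $L_q$ under the hypothesis $g\in L_\tau$.
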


\begin{proof}
To prove \eqref{eq2011021702}, we first see
$$
|a_i(x,u) \xi_i| \le \mu_1 |\xi| (|u|^{\gamma/2} + f)
\le \varepsilon \mu_1 |\xi|^2 + N(\varepsilon) \mu_1 |u|^\gamma + N(\varepsilon) \mu_1 |f|^2
$$
$$
\le \varepsilon \mu_1 |\xi|^2 + N(\varepsilon) \mu_1 |u|^\gamma + N(\varepsilon)\mu_1 |u|^2 |f|^2,
$$
provided that $|u| \ge 1$.
By taking $\varepsilon = \mu/(2\mu_1)$, we have
$$
\left(A_{ij}(x,u)\xi_j + a_i(x,u)\right)\xi_i
\ge \frac{\mu}2 |\xi|^2 - N(\mu,\mu_1) |u|^\gamma - N(\mu,\mu_1) |u|^2 |f|^2.
$$
Now we take $\psi = |f|^2 + g$.
Then the inequality \eqref{eq2011021702} follows.

For the inequality \eqref{eq2011021501}, we have
\begin{multline*}
|b(x,u,\xi)||u|
\le \mu_2 \left( |u||\xi|^{2(1-1/\gamma)} + |u|^{\gamma} + |u| g \right)
\\
\le \mu_2 \left( \varepsilon |\xi|^2 + N(\varepsilon)|u|^{\gamma} + |u|^2 g\right)
= \frac{\mu}4 |\xi|^2 + N(\mu, \mu_2)|u|^\gamma + N(\mu,\mu_2) |u|^2 g
\end{multline*}
for $|u| \ge 1$.
Upon recalling the definition of $\psi$ we obtain the desired inequality.
\end{proof}

Let $\sigma, \tau$ be numbers satisfying $\sigma\in (d,\infty)$ and $\tau \in (d/2,\infty)$, respectively.
Find $q_1 \in (1,\infty)$ satisfying
\begin{equation}
								\label{eq2011021701}
\frac 1 2 < \frac{1}{q_1} \le \frac{\gamma}4\left(1 - \frac{2(\gamma-2)}{\gamma p}\right),
\quad
\frac 1 2 < \frac{1}{q_1} \le \frac{\gamma}4\left(1 - \frac{1}{\min\{\sigma/2, \tau\}}\right),	
\end{equation}
where $p>2$ is the exponent from Theorem \ref{thm1}.
Indeed, this is possible since $p > 2$ and
$$
\frac 1 2 = \frac{\gamma}4\left(1 - \frac{1}{d/2}\right) < \frac{\gamma}4\left(1 - \frac{1}{\min\{\sigma/2, \tau\}}\right)
\quad
\text{if}
\quad
d > 2.
$$
When $d=2$, we take $\gamma > 2$ so that
$$
\gamma > \frac{2 \min\{\sigma/2, \tau\}}{\min\{\sigma/2, \tau\}-1}.
$$
Note $1<q_1<2$ and $\gamma q_1 > 4$.

\begin{lemma}
								\label{lem20110216_01}
Let $u \in W_2^1(\Omega)$ be a solution to \eqref{eq3.24} and
$f\in L_{\sigma}(\Omega)$, $g\in L_{\tau}(\Omega)$ for some $\sigma\in (d,\infty)$ and $\tau\in (d/2,\infty)$.
Then
$$
\int_{A_k} |\nabla u|^2 \, dx 
\le N \left(\int_{A_k} |u|^{\gamma q_1/2} \, dx\right)^{\frac{4}{\gamma q_1}}
\left(\int_{A_k} |u|^{\frac{\gamma q_1(\gamma-2)}{\gamma q_1 -4}} \, dx\right)^{\frac{\gamma q_1 -4}{\gamma q_1}}
$$
$$
+ \left(\int_{A_k} |u|^{\gamma q_1/2} \, dx\right)^{\frac{4}{\gamma q_1}}
\left(\int_{A_k} \psi^{\frac{\gamma q_1}{\gamma q_1 -4}} \, dx\right)^{\frac{\gamma q_1 -4}{\gamma q_1}},
$$
where $q_1$ is from \eqref{eq2011021701}, $N=N(\mu,\mu_1,\mu_2)$, and
$$
A_k = \{ x \in \Omega: u(x) > k\},
\quad 
k \ge 1,
\quad
\psi = |f|^2 + g.
$$

\end{lemma}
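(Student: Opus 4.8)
The plan is to test the weak formulation of \eqref{eq3.24} with an appropriate truncation of $u$ and then apply Hölder's inequality to the resulting terms. First I would choose the test function $\phi = (u-k)^+ = \max\{u-k, 0\}$, which belongs to $W^1_2(\Omega)$ since $u \in W^1_2(\Omega)$ and $k \ge 1$; note that $\nabla \phi = \nabla u \cdot \mathbf{1}_{A_k}$ and $\phi$ is supported in $A_k$. Plugging this into the definition of weak solution gives
\[
\int_{A_k} \left(A_{ij}(x,u) D_j u + a_i(x,u)\right) D_i u \, dx = \int_{A_k} b(x,u,\nabla u)(u-k)\, dx.
\]
On $A_k$ we have $u = |u| \ge k \ge 1$, so Lemma \ref{lem20110217} applies pointwise. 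Using \eqref{eq2011021702} to bound the left-hand side from below and \eqref{eq2011021501} (with $(u-k) \le u = |u|$ on $A_k$) to bound the right-hand side from above, the leading quadratic terms combine and we absorb: the left gives at least $\frac{\mu}{2}\int_{A_k}|\nabla u|^2$ minus lower order, the right contributes at most $\frac{\mu}{4}\int_{A_k}|\nabla u|^2$ plus lower order, so after absorbing the gradient terms we obtain
\[
\int_{A_k} |\nabla u|^2 \, dx \le N \int_{A_k} |u|^\gamma \, dx + N \int_{A_k} |u|^2 \psi(x)\, dx
\]
with $N = N(\mu,\mu_1,\mu_2)$ and $\psi = |f|^2 + g$.

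Next I would apply Hölder's inequality to each of the two integrals on the right, splitting off a factor of $|u|^{\gamma q_1/2}$ in each. For the first integral, write $|u|^\gamma = |u|^{\gamma q_1/2} \cdot |u|^{\gamma - \gamma q_1/2}$ and use Hölder with exponents $\frac{\gamma q_1}{4}$ and its conjugate $\frac{\gamma q_1}{\gamma q_1 - 4}$ (both admissible since $\gamma q_1 > 4$ by the remark following \eqref{eq2011021701}); the first factor becomes $\left(\int_{A_k}|u|^{\gamma q_1/2}\right)^{4/(\gamma q_1)}$ and the second factor's exponent on $|u|$ works out to $(\gamma - \gamma q_1/2)\cdot\frac{\gamma q_1}{\gamma q_1 - 4} = \frac{\gamma q_1(\gamma - 2)}{\gamma q_1 - 4}$, matching the claimed expression. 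For the second integral, write $|u|^2 \psi = |u|^{\gamma q_1/2}\cdot \left(|u|^{2 - \gamma q_1/2}\psi\right)$; here since $\gamma q_1 > 4$ the exponent $2 - \gamma q_1/2$ is negative and on $A_k$ (where $|u| > k \ge 1$) we have $|u|^{2-\gamma q_1/2} \le 1$, so $|u|^2\psi \le |u|^{\gamma q_1/2}\psi$, and Hölder with the same pair of exponents yields $\left(\int_{A_k}|u|^{\gamma q_1/2}\right)^{4/(\gamma q_1)}\left(\int_{A_k}\psi^{\gamma q_1/(\gamma q_1 - 4)}\right)^{(\gamma q_1 - 4)/(\gamma q_1)}$, which is the second line of the claimed estimate.

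The main obstacle, or rather the only delicate point, is the legitimacy of testing with $(u-k)^+$: one must ensure this function is genuinely in $W^1_2(\Omega)$ (standard, since truncation preserves $W^1_2$) and, more importantly, that all the integrals appearing are finite so that the manipulations are justified — this is exactly where the controlled growth conditions and $u \in W^1_2(\Omega) \hookrightarrow L_\gamma(\Omega)$ are used, together with the fact that $\psi = |f|^2 + g \in L_q(\Omega)$ with $q > d/2$ from Lemma \ref{lem20110217}. One should check that the exponent $\gamma q_1/(\gamma q_1 - 4)$ on $\psi$ does not exceed $q$; this follows from the second inequality in \eqref{eq2011021701}, which was precisely arranged so that $\frac{1}{q_1} \le \frac{\gamma}{4}\bigl(1 - \frac{1}{q}\bigr)$, i.e. $\frac{\gamma q_1}{\gamma q_1 - 4} \le q$. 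Everything else is the routine bookkeeping of exponents, and the constant $N$ depends only on $\mu,\mu_1,\mu_2$ since the absorption step and Lemma \ref{lem20110217} introduce no other dependence.
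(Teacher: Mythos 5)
Your overall strategy matches the paper's exactly: test with $(u-k)_+$, apply Lemma \ref{lem20110217} to the two sides, absorb the gradient terms, and then apply H\"older with conjugate exponents $\gamma q_1/4$ and $\gamma q_1/(\gamma q_1-4)$. The finiteness discussion and the check $\frac{\gamma q_1}{\gamma q_1-4}\le q$ from \eqref{eq2011021701} are also correct.

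However, both H\"older applications are mis-stated. For the first integral you must split $|u|^\gamma=|u|^2\cdot|u|^{\gamma-2}$, not $|u|^{\gamma q_1/2}\cdot|u|^{\gamma-\gamma q_1/2}$: applying H\"older with exponent $\gamma q_1/4$ to the factor $|u|^2$ gives $\bigl(\int |u|^{\gamma q_1/2}\bigr)^{4/(\gamma q_1)}$, whereas applying it to your factor $|u|^{\gamma q_1/2}$ would give $\bigl(\int |u|^{\gamma^2 q_1^2/8}\bigr)^{4/(\gamma q_1)}$. Your claimed identity $(\gamma-\gamma q_1/2)\cdot\frac{\gamma q_1}{\gamma q_1-4}=\frac{\gamma q_1(\gamma-2)}{\gamma q_1-4}$ is false unless $\gamma q_1=4$, which is excluded; the correct exponent arises from $(\gamma-2)\cdot\frac{\gamma q_1}{\gamma q_1-4}$. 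Likewise, for $\int_{A_k}|u|^2\psi$ you should apply H\"older directly to the pair $(|u|^2,\psi)$ with the same exponents; the detour through $|u|^2\le|u|^{\gamma q_1/2}$ is unnecessary and, once you are at $\int|u|^{\gamma q_1/2}\psi$, H\"older with exponent $\gamma q_1/4$ on the first factor no longer produces $\bigl(\int|u|^{\gamma q_1/2}\bigr)^{4/(\gamma q_1)}$. These are bookkeeping slips rather than a wrong idea, but as written the intermediate displayed claims do not hold and should be corrected.
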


\begin{proof}
First note that by Theorem \ref{thm1}, the definition of $\psi$, and \eqref{eq2011021701}, i.e.,
$$
\frac{\gamma q_1(\gamma-2)}{\gamma q_1 -4}
\le 
\gamma p/2,
\quad
\frac{\gamma q_1}{\gamma q_1 -4}
\le \min\{\sigma/2, \tau\},
$$
we have 
$$
\int_{A_k} |u|^{\frac{\gamma q_1(\gamma-2)}{\gamma q_1 -4}} \, dx < \infty,
\quad
\int_{A_k} \psi^{\frac{\gamma q_1}{\gamma q_1 -4}} \, dx < \infty.
$$

By taking $\phi = (u-k)_+ \in W_2^1(\Omega)$ as a test function, we obtain
$$
\int_{A_k} \left( A_{ij} D_ju D_i u + a_i D_i u \right) \, dx  = \int_{A_k} b (u-k) \, dx.
$$
From Lemma \ref{lem20110217} it follows that
$$
\text{LHS} \ge \frac \mu 2 \int_{A_k} |\nabla u|^2 \, dx - N \int_{A_k} |u|^\gamma \, dx - N \int_{A_k} |u|^2 \psi \, dx,
$$	
and
$$
\text{RHS} \le \int_{A_k} |b||u| \,dx
\le \frac \mu 4 \int_{A_k} |\nabla u|^2 \, dx + N \int_{A_k} |u|^\gamma \, dx 
+ \int_{A_k} |u|^2 \psi \, dx,
$$
where $N=N(\mu,\mu_1,\mu_2)$.
Combining the above two inequalities gives
$$
\int_{A_k} |\nabla u|^2 \, dx 
\le N \int_{A_k} |u|^\gamma \, dx + N\int_{A_k} |u|^2 \psi \, dx.
$$
Then we use H\"{o}lder's inequality to obtain the desired inequality (recall that $\gamma q_1/4 > 1$). That is,
$$
\int_{A_k} |u|^\gamma \, dx
= \int_{A_k} |u|^2 |u|^{\gamma-2} \, dx
\le \left(\int_{A_k} |u|^{2\frac{\gamma q_1}{4}} \, dx\right)^\frac{4}{\gamma q_1}
\left(\int_{A_k} |u|^{\frac{\gamma q_1(\gamma-2)}{\gamma q_1 - 4}} \, dx\right)^\frac{\gamma q_1 - 4}{\gamma q_1},
$$
$$
\int_{A_k} |u|^2 \psi \, dx
\le \left(\int_{A_k} |u|^{2\frac{\gamma q_1}{4}}\,dx \right)^{\frac{4}{\gamma q_1}}
\left(\int_{A_k} \psi^{\frac{\gamma q_1}{\gamma q_1 - 4}} \, dx \right)^\frac{\gamma q_1 - 4}{\gamma q_1}.
$$
\end{proof}

A similar estimate as in the above lemma is needed on the set $\cB_k = \{ x \in \Omega: u(x) < k\}$.

\begin{lemma}
								\label{lem110411_01}
Let $u \in W_2^1(\Omega)$ be a solution to \eqref{eq3.24} and 
$f\in L_{\sigma}(\Omega)$, $g\in L_{\tau}(\Omega)$ for some $\sigma\in (d,\infty)$ and $\tau\in (d/2,\infty)$.
Then
$$
\int_{\cB_k} |\nabla u|^2 \, dx 
\le N \left(\int_{\cB_k} |u|^{\gamma q_1/2} \, dx\right)^{\frac{4}{\gamma q_1}}
\left(\int_{\cB_k} |u|^{\frac{\gamma q_1(\gamma-2)}{\gamma q_1 -4}} \, dx\right)^{\frac{\gamma q_1 -4}{\gamma q_1}}
$$
$$
+ \left(\int_{\cB_k} |u|^{\gamma q_1/2} \, dx\right)^{\frac{4}{\gamma q_1}}
\left(\int_{\cB_k} \psi^{\frac{\gamma q_1}{\gamma q_1 -4}} \, dx\right)^{\frac{\gamma q_1 -4}{\gamma q_1}},
$$
where $q_1$ is from \eqref{eq2011021701}, $N=N(\mu,\mu_1,\mu_2)$, and
$$
\cB_k = \{ x \in \Omega: u(x) < k\},
\quad
k \le -1,
\quad
\psi = |f|^2 + g.
$$
\end{lemma}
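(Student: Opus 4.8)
The plan is to run the argument of Lemma~\ref{lem20110216_01} essentially verbatim, with the superlevel set $A_k$ replaced by the sublevel set $\cB_k$ and the test function $(u-k)_+$ replaced by $-(k-u)_+$. The one structural point to keep track of is that, since $k \le -1$, on $\cB_k$ we have $u < k \le -1$, so $|u| \ge 1$ throughout $\cB_k$; this is precisely what makes Lemma~\ref{lem20110217} applicable there.

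First I would record, exactly as in the proof of Lemma~\ref{lem20110216_01}, that the inequalities
$$
\frac{\gamma q_1(\gamma-2)}{\gamma q_1 - 4} \le \frac{\gamma p}{2},
\qquad
\frac{\gamma q_1}{\gamma q_1 - 4} \le \min\{\sigma/2,\tau\}
$$
from \eqref{eq2011021701}, together with Theorem~\ref{thm1} and $f \in L_\sigma(\Omega)$, $g \in L_\tau(\Omega)$, guarantee that $\int_{\cB_k}|u|^{\gamma q_1(\gamma-2)/(\gamma q_1-4)}\,dx$ and $\int_{\cB_k}\psi^{\gamma q_1/(\gamma q_1-4)}\,dx$ are finite, so the right-hand side of the asserted estimate makes sense. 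Then I would take $\phi = -(k-u)_+ \in W_2^1(\Omega)$ as a test function in the definition of a weak solution. Since $\phi$ is supported in $\cB_k$ with $D_i\phi = D_i u$ and $\phi = u - k$ there, this yields
$$
\int_{\cB_k}\bigl(A_{ij}D_j u\,D_i u + a_i D_i u\bigr)\,dx = \int_{\cB_k} b\,(u-k)\,dx.
$$
By Lemma~\ref{lem20110217} and $|u|\ge 1$ on $\cB_k$, the left side is bounded below by $\frac{\mu}{2}\int_{\cB_k}|\nabla u|^2\,dx - N\int_{\cB_k}|u|^\gamma\,dx - N\int_{\cB_k}|u|^2\psi\,dx$. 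On the right side, $u < k < 0$ on $\cB_k$ gives $|u-k| = |u|-|k| \le |u|$, so $|b\,(u-k)| \le |b|\,|u|$, and \eqref{eq2011021501} bounds the right side above by $\frac{\mu}{4}\int_{\cB_k}|\nabla u|^2\,dx + N\int_{\cB_k}|u|^\gamma\,dx + N\int_{\cB_k}|u|^2\psi\,dx$. Combining these two bounds and absorbing the gradient term gives $\int_{\cB_k}|\nabla u|^2\,dx \le N\int_{\cB_k}|u|^\gamma\,dx + N\int_{\cB_k}|u|^2\psi\,dx$.

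Finally, writing $|u|^\gamma = |u|^2|u|^{\gamma-2}$ and applying H\"older's inequality with conjugate exponents $\gamma q_1/4$ and $\gamma q_1/(\gamma q_1-4)$ (legitimate since $\gamma q_1/4 > 1$) to both terms, exactly as at the end of the proof of Lemma~\ref{lem20110216_01}, produces the stated inequality. I do not expect any genuine obstacle here: the analytic content is identical to that of Lemma~\ref{lem20110216_01}, and the only care needed is the sign bookkeeping in the choice of test function together with the elementary inequality $|u-k|\le|u|$, valid on $\cB_k$ when $k\le-1$.
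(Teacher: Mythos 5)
Your proof is correct and follows the same approach as the paper's, which is simply to rerun Lemma~\ref{lem20110216_01} with the test function $(k-u)_+$ in place of $(u-k)_+$; your choice $\phi=-(k-u)_+$ produces the identical identity after substituting into the weak formulation, and the sign bookkeeping $|u-k|=|u|-|k|\le|u|$ on $\cB_k$ (using $k\le -1$) is exactly the point that makes \eqref{eq2011021501} applicable, just as $0<u-k\le u$ did in the superlevel-set case.
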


\begin{proof}
The proof follows from the lines of the proof for Lemma \ref{lem20110216_01}
with $\phi = (k-u)_+$.
\end{proof}

In the proof of boundedness of weak solutions, we need the following well-known result.
It can also be found, for example, in \cite{LU, LSU} if $\delta_1=\delta_2$.

\begin{lemma}
								\label{lem110411}
Let $\{\Psi_n\}$, $n = 0, 1, 2, \cdots$, be a sequence of positive numbers satisfying
$$
\Psi_{n+1} \le K b^n \left(\Psi_n^{1+\delta_1} + \Psi_n^{1+\delta_2}\right),
\quad
n=0,1,2,\cdots,
$$	
for some $b>1$, $K > 0$, and $\delta_2 \ge \delta_1$.
If
$$
\Psi_0 \le (2K)^{-\frac{1}{\delta_1}}b^{-\frac{1}{\delta_1^2}},
$$
then
$$
\Psi_n \le (2K)^{-\frac{1}{\delta_1}}b^{-\frac{1}{\delta_1^2}-\frac{n}{\delta_1}},
\quad n \in \bN.
$$
Thus, in particular, $\Psi_n \to 0$ as $n \to \infty$.
\end{lemma}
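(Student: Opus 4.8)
The final statement to prove is Lemma \ref{lem110411}, the standard fast-geometric-decay iteration lemma. Here is how I would approach it.

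\medskip

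\noindent\textbf{Proof plan.}
The plan is to prove by induction on $n$ the claimed bound
$$
\Psi_n \le (2K)^{-1/\delta_1}\, b^{-1/\delta_1^2 - n/\delta_1},
$$
which I will abbreviate as $\Psi_n \le M b^{-n/\delta_1}$ with $M := (2K)^{-1/\delta_1} b^{-1/\delta_1^2}$. The base case $n=0$ is exactly the hypothesis $\Psi_0 \le (2K)^{-1/\delta_1} b^{-1/\delta_1^2} = M$. For the inductive step, assume $\Psi_n \le M b^{-n/\delta_1}$. The key preliminary observation is that since $b > 1$, $\delta_1 \le \delta_2$, and the inductive bound forces $\Psi_n \le M \le (2K)^{-1/\delta_1} \le 1$ (one checks $M \le 1$ from $2K \ge 1$, which may be assumed WLOG since replacing $K$ by $\max\{K,1\}$ only weakens the hypothesis; if $2K<1$ the statement is even easier as then $\Psi_n^{1+\delta_2}\le\Psi_n^{1+\delta_1}$ still holds once $\Psi_n\le 1$), we have $\Psi_n^{1+\delta_2} \le \Psi_n^{1+\delta_1}$, so the recursion gives $\Psi_{n+1} \le 2K b^n \Psi_n^{1+\delta_1}$.

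\medskip

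\noindent Substituting the inductive hypothesis,
$$
\Psi_{n+1} \le 2K b^n \left(M b^{-n/\delta_1}\right)^{1+\delta_1}
= 2K M^{1+\delta_1} b^{n - n(1+\delta_1)/\delta_1}
= 2K M^{1+\delta_1} b^{-n/\delta_1}.
$$
It remains to check that $2K M^{1+\delta_1} b^{-n/\delta_1} \le M b^{-(n+1)/\delta_1}$, i.e. that $2K M^{\delta_1} \le b^{-1/\delta_1}$. But $M^{\delta_1} = (2K)^{-1} b^{-1/\delta_1}$ by definition of $M$, so $2K M^{\delta_1} = b^{-1/\delta_1}$, and equality holds. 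This closes the induction. Finally, since $b > 1$ and $\delta_1 > 0$ (note $\delta_1 \ge 0$ is assumed, and the case $\delta_1 = 0$ is vacuous or degenerate for the stated conclusion, so we read $\delta_1 > 0$), the exponent $-1/\delta_1^2 - n/\delta_1 \to -\infty$, hence $\Psi_n \to 0$.

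\medskip

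\noindent\textbf{Main obstacle.} There is no deep obstacle here; the lemma is elementary. The only point requiring a little care is the reduction from two exponents $\delta_1,\delta_2$ to the single dominant one: one must first establish that the iterates stay $\le 1$ (so that the larger power is the smaller quantity), and this itself follows from the induction hypothesis, so the argument is genuinely self-contained. One should also be slightly careful about the normalization $2K \ge 1$; if this is not assumed in the statement it can be arranged harmlessly, since enlarging $K$ preserves the recursion inequality and only strengthens the smallness hypothesis on $\Psi_0$ in the direction we want. With that in hand, the induction is a one-line computation at each step because the constant $M$ has been chosen precisely to make the recursive inequality tight.
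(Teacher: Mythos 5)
Your induction and the calculation $2K M^{\delta_1} = b^{-1/\delta_1}$ (with $M := (2K)^{-1/\delta_1}b^{-1/\delta_1^2}$) are correct, and collapsing the two exponents to the smaller one via $\Psi_n \le 1$ is the right idea. But the step that ensures $\Psi_n \le 1$ is broken. You need $M \le 1$, which is equivalent to $2K\,b^{1/\delta_1} \ge 1$, and this is \emph{not} among the lemma's hypotheses. Your proposed ``WLOG $2K\ge 1$'' fix goes the wrong way: replacing $K$ by $\max\{K,1\}$ makes $(2K)^{-1/\delta_1}b^{-1/\delta_1^2}$ \emph{smaller}, so the smallness condition on $\Psi_0$ becomes \emph{stronger}, not weaker; you would need to reverify it, and in general you cannot. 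The parenthetical remark ``if $2K<1$ the statement is even easier $\ldots$ once $\Psi_n\le 1$'' is circular, since whether $\Psi_n\le 1$ is exactly what is in question.

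In fact the lemma as literally stated fails when $M>1$. Take $\delta_1 = 1$, $\delta_2 = 2$, $K = 1/8$, $b = 2$: then $M = 4\cdot\tfrac12 = 2$. With $\Psi_0 = 2 = M$ and equality in the recursion, $\Psi_1 = \tfrac18(2^2 + 2^3) = \tfrac32$, which already exceeds the claimed bound $M b^{-1} = 1$; iterating further, $\Psi_3 \approx 2.38 > \Psi_2$ and the sequence blows up, so $\Psi_n \not\to 0$. A correct version of the lemma needs the additional hypothesis $\Psi_0 \le 1$ (equivalently $2K\,b^{1/\delta_1}\ge 1$), after which your induction is exactly right and self-contained. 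In the paper's application inside the proof of Theorem \ref{thm110411} this is harmless, because $k$ can be chosen large enough that $\Psi_0 \le \min\{1,M\}$; but a proof of the lemma must make that constraint explicit rather than try to remove it by rescaling $K$.
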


In the following theorem we prove the boundedness of weak solutions to \eqref{eq3.24}
using an iteration argument of De Giorgi type.

\begin{theorem}
								\label{thm110411}
Let $u \in W_2^1(\Omega)$ be a solution to \eqref{eq3.24} and 
$f\in L_{\sigma}(\Omega)$, $g\in L_{\tau}(\Omega)$ for some $\sigma\in (d,\infty)$ and $\tau\in (d/2,\infty)$.
Then for some number $M$, depending only on
$d$, $\mu$, $\mu_1$, $\mu_2$, $\gamma$, $p$, $\sigma$, $\tau$, $u$, $\|f\|_{L_\sigma(\Omega)}$, $\|g\|_{L_\tau(\Omega)}$, $\beta$, $R_0$,
and $\text{\rm diam}\Omega$,
we have
$$
\|u\|_{L_{\infty}(\Omega)} \le M.
$$
Here $p$ is from Theorem \ref{thm1}.
\end{theorem}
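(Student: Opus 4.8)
The plan is to run a De Giorgi type iteration on the level sets $A_k=\{x\in\Omega:u(x)>k\}$, with Lemma~\ref{lem20110216_01} and the higher integrability of Theorem~\ref{thm1} providing the energy input and Lemma~\ref{lem110411} doing the iteration. Fix a parameter $M\ge 1$, to be taken large, set $k_n=2M-M2^{-n}$ for $n=0,1,2,\dots$ (so $k_0=M$ and $k_n\uparrow 2M$), and put
$$
\Psi_n=\int_{A_{k_n}}|u|^{\gamma q_1/2}\,dx,
$$
where $q_1$ is the exponent fixed in \eqref{eq2011021701}. By Theorem~\ref{thm1} we have $u\in L_{\gamma p/2}(\Omega)$ and $\psi=|f|^2+g\in L_q(\Omega)$ with $q=\min\{\sigma/2,\tau\}$; since $\gamma q_1/2\le\gamma p/2$ the quantities $\Psi_n$ are finite, $\Psi_0=\int_{\{u>M\}}|u|^{\gamma q_1/2}\,dx\to 0$ as $M\to\infty$, and $|A_M|\le|\Omega|/2$ once $M$ is large. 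The goal is a recursion $\Psi_{n+1}\le Kb^n(\Psi_n^{1+\delta_1}+\Psi_n^{1+\delta_2})$ with $b>1$, $\delta_2\ge\delta_1>0$, and $K$ as small as we like, so that Lemma~\ref{lem110411} applies.

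For the inductive step, note first that on $A_{k_{n+1}}$ one has $u-k_n>M2^{-(n+1)}$ and $k_n<2M$, hence $u\le 2^{n+3}(u-k_n)$ there, giving $\Psi_{n+1}\le 2^{(n+3)\gamma q_1/2}\int_{A_{k_n}}(u-k_n)_+^{\gamma q_1/2}\,dx$. Since $1<q_1<2$ and $\gamma q_1>4$, the power $\gamma q_1/2$ lies strictly between $2$ and $\gamma$, so H\"older's inequality gives
$$
\int_{A_{k_n}}(u-k_n)_+^{\gamma q_1/2}\,dx\le\Big(\int_{A_{k_n}}(u-k_n)_+^{\gamma}\,dx\Big)^{q_1/2}|A_{k_n}|^{1-q_1/2},
$$
and because $(u-k_n)_+$ vanishes on $\{u\le k_n\}$, a set of measure at least $|\Omega|/2$ when $M$ is large, the Poincar\'e--Sobolev inequality on the bounded Lipschitz domain $\Omega$ yields $\int_{A_{k_n}}(u-k_n)_+^{\gamma}\,dx\le N(\int_{A_{k_n}}|\nabla u|^2\,dx)^{\gamma/2}$. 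For the gradient integral I apply Lemma~\ref{lem20110216_01}: its factor $(\int_{A_{k_n}}|u|^{\gamma q_1/2}\,dx)^{4/(\gamma q_1)}$ is precisely $\Psi_n^{4/(\gamma q_1)}$, while the other two factors are estimated by H\"older against $u\in L_{\gamma p/2}(\Omega)$ (valid since, by \eqref{eq2011021701}, $\gamma q_1(\gamma-2)/(\gamma q_1-4)\le\gamma p/2$) and against $\psi\in L_q(\Omega)$ (valid since $\gamma q_1/(\gamma q_1-4)\le q$), producing powers of $|A_{k_n}|$. Using the identity $4/(\gamma q_1)+(\gamma q_1-4)/(\gamma q_1)=1$ to collapse the $\Psi_n$-exponents, one gets $\int_{A_{k_n}}|\nabla u|^2\,dx\le N(\Psi_n^{1-2(\gamma-2)/(\gamma p)}+\Psi_n^{1-1/q})$, and Chebyshev's inequality $|A_{k_n}|\le M^{-\gamma q_1/2}\Psi_n$ turns the factor $|A_{k_n}|^{1-q_1/2}$ into $M^{-c}\Psi_n^{1-q_1/2}$.

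Assembling these estimates and using $(a+b)^{\theta}\le 2^{\theta}(a^{\theta}+b^{\theta})$ with $\theta=\gamma q_1/4\ge 1$, one arrives at
$$
\Psi_{n+1}\le NM^{-c}\,b^n\big(\Psi_n^{E_1}+\Psi_n^{E_2}\big),\qquad b=2^{\gamma q_1/2},
$$
with $E_1=\tfrac{\gamma q_1}{4}\big(1-\tfrac{2(\gamma-2)}{\gamma p}\big)+1-\tfrac{q_1}{2}$ and $E_2=\tfrac{\gamma q_1}{4}\big(1-\tfrac1q\big)+1-\tfrac{q_1}{2}$. A short computation gives $E_1-1=\tfrac{q_1(\gamma-2)(p-2)}{4p}>0$ (using $p>2$) and $E_2-1=\tfrac{q_1}{4}\big(\gamma(1-1/q)-2\big)>0$, since $q=\min\{\sigma/2,\tau\}>d/2=\gamma/(\gamma-2)$ when $d>2$, and likewise by the choice of $\gamma$ when $d=2$; note $\delta_1:=\min\{E_1,E_2\}-1$ and $\delta_2:=\max\{E_1,E_2\}-1$ depend only on $d,\gamma,p,q,q_1$, not on $M$. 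Choosing $M$ so large that $|A_M|\le|\Omega|/2$ (which legitimizes the Sobolev step above) and $\Psi_0\le(2K)^{-1/\delta_1}b^{-1/\delta_1^2}$ — possible because $\Psi_0\to 0$ as $M\to\infty$ whereas $K=NM^{-c}$ makes $(2K)^{-1/\delta_1}$ grow only polynomially in $M$ — Lemma~\ref{lem110411} gives $\Psi_n\to 0$, hence $\int_{\{u>2M\}}|u|^{\gamma q_1/2}\,dx=0$ and $u\le 2M$ a.e. in $\Omega$. Repeating the argument verbatim with $\cB_k=\{u<k\}$, the test function $(k-u)_+$, and Lemma~\ref{lem110411_01} in place of Lemma~\ref{lem20110216_01} bounds $u$ from below, so $\|u\|_{L_\infty(\Omega)}\le 2M$, which is the asserted estimate.

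The main obstacle is the exponent bookkeeping in the middle step: one must check that the interval for $q_1$ in \eqref{eq2011021701} — calibrated both to the exponent $p$ of Theorem~\ref{thm1} and to $\sigma,\tau$ — is exactly what makes the final powers $E_1,E_2$ strictly exceed $1$. The decisive point is the cancellation $4/(\gamma q_1)+(\gamma q_1-4)/(\gamma q_1)=1$: it lets the factor $\Psi_n^{4/(\gamma q_1)}$ coming from Lemma~\ref{lem20110216_01} combine with the gains from higher integrability without requiring $p$ to be large, which matters because Theorem~\ref{thm1} supplies only some $p>2$. A secondary point is that $M$ must be taken large both to launch the iteration and to validate the Poincar\'e--Sobolev step, but since all $M$-dependence is polynomial this is harmless. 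The overall scheme follows De Giorgi's technique as in \cite{LU,Lieb83,WZ11}.
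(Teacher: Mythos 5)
Your proof is correct and runs the same De Giorgi iteration as the paper, powered by the identical ingredients: Theorem~\ref{thm1} for the starting higher integrability, Lemma~\ref{lem20110216_01} (resp.\ Lemma~\ref{lem110411_01}) for the level-set energy estimate, the choice \eqref{eq2011021701} of $q_1$, and Lemma~\ref{lem110411} to close the recursion. The differences are technical rather than structural. You take $\Psi_n=\int_{A_{k_n}}|u|^{\gamma_*}\,dx$ whereas the paper works with $\Psi_n=\int_{A_{k_n}}(u-k_n)^{\gamma_*}\,dx$ and passes between the two via the estimate \eqref{eq2011021801}; the two bookkeeping conventions are interchangeable, as your inequality $u\le 2^{n+3}(u-k_n)$ on $A_{k_{n+1}}$ shows. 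You replace the full Sobolev embedding used in \eqref{eq110411_05}--\eqref{eq110411_08} by a Poincar\'e--Sobolev inequality for functions vanishing on a set of measure $\ge|\Omega|/2$, which is valid but requires the auxiliary smallness condition $|A_M|\le|\Omega|/2$ (easily arranged); the paper avoids this condition by retaining the $L_2$ lower-order term and estimating it separately in \eqref{eq110411_08}, which is marginally simpler. Finally, you run an additional H\"older step on the two tail factors of Lemma~\ref{lem20110216_01} against $\|u\|_{L_{\gamma p/2}}$ and $\|\psi\|_{L_q}$, producing larger exponents $E_1,E_2$; the paper simply absorbs those factors into the constant $N$ and gets the smaller $\delta_1=(\gamma-\gamma_*)/\gamma$, $\delta_2=\gamma_*(\gamma-2)/(2\gamma)$, which suffice. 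Both versions yield a valid recursion with exponents strictly exceeding $1$, and the final choice of the level $M$ (resp.\ $k$ in the paper) large enough to satisfy the threshold condition of Lemma~\ref{lem110411} works in either case, since in yours $\Psi_0\to 0$ and $(2K)^{-1/\delta_1}$ grows polynomially in $M$, while in the paper's $\Psi_0$ is bounded and $K\to 0$.
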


\begin{proof}
We take an increasing sequence
$$
k_n = k\left(2 - \frac 1 {2^n} \right),
\quad n = 0, 1, 2, \cdots,
$$
where $k \ge 1$ will be specified later.
Fix $q_1$ so that it satisfies \eqref{eq2011021701}.
Then set
$$
\gamma_* = \frac{\gamma q_1}{2} > 2,
\quad
\Psi_n := \int_{A_{k_n}} (u-k_n)^{\gamma_*} \, dx,
$$
where
$A_{k_n} = \{ x \in \Omega: u(x) > k_n\}$.
Note that $\frac{\gamma}{\gamma_*} = \frac{2}{q_1} > 1$ since $q_1 \in (1,2)$.

Using the fact that $A_{k_{n+1}} \subset A_{k_n}$, we have
$$
\Psi_n = \int_{A_{k_n}} (u-k_n)^{\gamma_*} \, dx \ge \int_{A_{k_{n+1}}} (u-k_n)^{\gamma_*} \, dx
$$
$$
\ge \int_{A_{k_{n+1}}} u^{\gamma_*} \left(1-\frac{k_n}{k_{n+1}}\right)^{\gamma_*} \, dx
\ge \frac{1}{2^{\gamma_* (n+2)}} \int_{A_{k_{n+1}}} u^{\gamma_*} \, dx.
$$
That is,
\begin{equation}
								\label{eq2011021801}
\int_{A_{k_{n+1}}} u^{\gamma_*} \, dx \le 2^{\gamma_*(n+1)} \Psi_n.	
\end{equation}
We also have
\begin{multline}
								\label{eq2011021802}
|A_{k_{n+1}}| \le \int_{A_{k_{n+1}}} \left(\frac{u-k_n}{k_{n+1}-k_n}\right)^{\gamma_*} \, dx
\\
\le \int_{A_{k_n}} \left(\frac{2^{n+1}}{k}\right)^{\gamma_*} (u-k_n)^{\gamma_*} \, dx
= \frac{2^{{\gamma_*}(n+1)}}{k^{\gamma_*}} \Psi_n.
\end{multline}

We now observe that by H\"{o}lder's inequality
\begin{equation}
								\label{eq110411_06}
\Psi_{n+1} = 
\int_{A_{k_{n+1}}} (u-k_{n+1})^{\gamma_*} \, dx
\le \left(\int_{A_{k_{n+1}}} (u-k_{n+1})^\gamma \, dx\right)^{\frac{\gamma_*}{\gamma}}
|A_{k_{n+1}}|^{\frac{\gamma-\gamma_*}{\gamma}}.	
\end{equation}
Note that by the Sobolev embedding theorem,
\begin{multline}
								\label{eq110411_05}	
\left(\int_{A_{k_{n+1}}} (u-k_{n+1})^\gamma \, dx\right)^{1/\gamma}
\\
\le N \left(\int_{\Omega}|\nabla(u-k_{n+1})_+|^2\,dx\right)^{1/2}
+ N \left(\int_{\Omega}|(u-k_{n+1})_+|^2\,dx\right)^{1/2}
\\
\le N\left(\int_{A_{k_{n+1}}}|\nabla u|^2\,dx\right)^{1/2}
+ N \left(\int_{A_{k_{n+1}}}|u-k_{n+1}|^2\,dx\right)^{1/2} :=I_1 + I_2,
\end{multline}
where $N=N(d,\gamma, \beta,R_0, \text{\rm diam}\Omega)$.
To estimate $I_1$ in \eqref{eq110411_05},
we use Lemma \ref{lem20110216_01} and \eqref{eq2011021801} (recall that $\gamma_*=\gamma q_1/2$)
to get
\begin{equation}
								\label{eq110411_07}
\int_{A_{k_{n+1}}} |\nabla u|^2 \, dx 
\le N \left(\int_{A_{k_{n+1}}} u^{\gamma_*} \, dx\right)^{2/\gamma_*}
\le N 2^{2(n+1)} \Psi_n^{\frac{2}{\gamma_*}},	
\end{equation}
where 
$$
N = N\left(\mu, \mu_1, \mu_2, \gamma, p, \sigma, \tau, \int_{\Omega}|u|^{\gamma p/2} \, dx,
\int_{\Omega} \psi^{\min\{\sigma/2,\tau\}} \, dx\right).
$$
Using the facts that $\gamma_* > 2$ and $\Psi_{n+1} \le \Psi_n$, the term $I_2$ in \eqref{eq110411_05}
is estimate as
\begin{multline}
								\label{eq110411_08}
\left(\int_{A_{k_{n+1}}}|u-k_{n+1}|^2\,dx\right)^{1/2}
\le |A_{k_{n+1}}|^{1/2-1/\gamma_*}\left(\int_{A_{k_{n+1}}} (u-k_{n+1})^{\gamma_*} \, dx \right)^{1/\gamma_*}
\\
= |A_{k_{n+1}}|^{1/2-1/\gamma_*}\Psi_{n+1}^{1/\gamma_*}
\le |A_{k_{n+1}}|^{1/2-1/\gamma_*} \Psi_n^{1/\gamma_*}.
\end{multline}
Combining \eqref{eq110411_06}, \eqref{eq110411_05}, \eqref{eq110411_07}, \eqref{eq110411_08},
and \eqref{eq2011021802},
we obtain
$$
\Psi_{n+1} \le N |A_{k_{n+1}}|^{\frac{\gamma-\gamma_*}{\gamma}}
\left[2^{\gamma_* (n+1)} \Psi_n + |A_{k_{n+1}}|^{\frac{\gamma_*-2}{2}} \Psi_n \right]
$$
$$
\le N \left(\frac{2^{{\gamma_*}(n+1)}}{k^{\gamma_*}} \Psi_n\right)^{\frac{\gamma-\gamma_*}{\gamma}} 2^{\gamma_* (n+1)} \Psi_n
+ N \left(\frac{2^{{\gamma_*}(n+1)}}{k^{\gamma_*}} \Psi_n\right)^{\frac{\gamma_*(\gamma-2)}{2\gamma}} \Psi_n =: J_1 + J_2,
$$
where
$$
J_1 = N k^{-\frac{\gamma_*(\gamma-\gamma_*)}{\gamma}} 2^{\gamma_*\left(\frac{\gamma-\gamma_*}{\gamma}+1\right)} 
2^{\gamma_*\left(\frac{\gamma-\gamma_*}{\gamma}+1\right)n}
\Psi_n^{1+\frac{\gamma-\gamma_*}{\gamma}},
$$
$$
J_2 = N k^{-\frac{\gamma_*^2(\gamma-2)}{2\gamma}}
2^{\frac{\gamma_*^2(\gamma-2)}{2\gamma}}2^{\frac{\gamma_*^2(\gamma-2)}{2\gamma}n}
\Psi_n^{1+\frac{\gamma_*(\gamma-2)}{2\gamma}}.
$$

Set
$$
\delta_1 = \frac{\gamma-\gamma_*}{\gamma},
\quad
\delta_2 = \frac{\gamma_*(\gamma-2)}{2\gamma},
\quad
b = \max\left\{2^{\gamma_*\left(\frac{\gamma_*(\gamma-2)}{2\gamma}+1\right)},
2^{\frac{\gamma_*^2(\gamma-2)}{2\gamma}}\right\},
$$
$$
K = N k^{-\frac{\gamma_*(\gamma-\gamma_*)}{\gamma}} b.
$$
Then $\delta_2 \ge \delta_1 > 0$, $b>1$, and
$$
J_1 \le K b^n \Psi_n^{1+\delta_1},
\quad
J_2 \le K b^n \Psi_n^{1+\delta_2}.
$$
Hence
$$
\Psi_{n+1} \le K b^n \left(\Psi_n^{1+\delta_1} + \Psi_n^{1+\delta_2}\right).
$$

Observe that
$$
\Psi_0 = \int_{u > k}(u-k)^{\gamma_*}\, dx
\le \int_{\Omega} u_+^{\gamma_*} \, dx
$$
$$
= \left( (2K)^{\frac{1}{\delta_1}} b^{\frac{1}{\delta_1^2}} \int_{\Omega} u_+^{\gamma_*} \, dx \right) (2K)^{-\frac{1}{\delta_1}} b^{-\frac{1}{\delta_1^2}}
\le (2K)^{-\frac{1}{\delta_1}} b^{-\frac{1}{\delta_1^2}}
$$
provided that
$$
(2K)^{\frac{1}{\delta_1}} b^{\frac{1}{\delta_1^2}} \int_{\Omega} u_+^{\gamma_*} \, dx \le 1,
$$
that is, if we take $k \ge 1$ so that
$$
k = \max\left\{1, 2^{\frac{1}{\gamma_*\delta_1}} N^{\frac{1}{\gamma_*\delta_1}}  b^{\frac{1+\delta_1}{\gamma_*\delta_1^2}} \left(\int_{\Omega} u_+^{\gamma_*} \, dx\right)^{1/\gamma_*}\right\}.
$$
Then by Lemma \ref{lem110411} it follows that $u \le 2k$ on $\Omega$.
To prove that $u$ is bounded below, we repeat the above argument using Lemma \ref{lem110411_01}.
The theorem is proved.
\end{proof}

\section{H\"{o}lder continuity}
							\label{sec4}

The inequality \eqref{eq2011021702} holds true for $|u| \ge 1$.
However, from the proof of Lemma \ref{lem20110217} it is possible to have
\begin{equation}
								\label{eq110412_03}
\left(A_{ij}D_j u + a_i\right)D_i u \ge \frac{\mu}2|\nabla u|^2 -  N|u|^\gamma - N |f|^2	
\end{equation}
for all values of $u$, where $N=N(\mu,\mu_1)$.
Observe that from the condition \eqref{eq110409_01} on $b$, we obtain
$$
|b(x,u,\nabla u)(u-k)_+| \le \mu_2 (u-k)_+ \left(|\nabla u|^{2(1-1/\gamma)} + |u|^{\gamma-1} + g\right)
$$
$$
\le \frac{\mu}{4}|\nabla u|^2 + N(\mu,\mu_2) (u-k)_+^\gamma + \mu_2(u-k)_+\left(|u|^{\gamma-1} + g\right)
$$
$$
\le \frac{\mu}{4}|\nabla u|^2 + N\left(1 + |u|^{\gamma-1} + g\right)
$$
for $k \ge u - 1$, where $N=N(\mu,\mu_2)$. By the same reasoning
$$
|b(x,u,\nabla u)(k-u)_+| \le \frac{\mu}{4}|\nabla u|^2 + N\left(1 + |u|^{\gamma-1} + g\right)
$$
for $k \le u+1$.
Set
\begin{equation}
								\label{eq0328_01}
\varphi_0 = |u|^\gamma + |f|^2,
\quad
\varphi_1 = |u|^{\gamma/2} + f,
\quad
\varphi_2 =  |u|^{\gamma-1} + g + 1.	
\end{equation}
Then by \eqref{eq110412_03} and the condition on $a_i$ we have
\begin{equation}
								\label{eq110412_02}
(A_{ij}D_ju + a_i)D_i u \ge \frac \mu 2 |\nabla u|^2 - N \varphi_0,
\quad
|a_i| \le N \varphi_1
\end{equation}
for all values of $u$, where $N=N(\mu,\mu_1)$.
We also have
\begin{equation}
								\label{eq110412_04}
|b(x,u,\nabla u) (u-k)_+ | \le \frac \mu 4 |\nabla u|^2 + N \varphi_2
\quad
\text{for}
\quad
k \ge u - 1,	
\end{equation}
$$
|b(x,u,\nabla u) (k-u)_+ | \le \frac \mu 4 |\nabla u|^2 + N \varphi_2
\quad
\text{for}
\quad
k \le u + 1,
$$
where $N=N(\mu,\mu_2)$.
As shown in Theorem \ref{thm110411}, $|u| \le M$ on $\Omega$ for some constant $M$.
Thus, if $f \in L_\sigma(\Omega)$ and $g \in L_\tau(\Omega)$ for some $\sigma \in (d,\infty)$ and $\tau \in (d/2,\infty)$ as in Theorem \ref{thm2}, we have
$$
\varphi_0, \varphi_2 \in L_q(\Omega)
\quad
\varphi_1 \in L_{2q}(\Omega),
\quad
q = \min\{\sigma/2,\tau\} > d/2.
$$

\begin{lemma}
Let $\zeta \in W_2^1(\bR^d)$ have a  compact support,
and $k$ be a real number such that $k \ge u - 1$ on the support of $\zeta$.
Let $u \in W_2^1(\Omega)$ be a solution to \eqref{eq3.24} and 
$f\in L_{\sigma}(\Omega)$, $g\in L_{\tau}(\Omega)$ for some $\sigma\in (d,\infty)$ and $\tau\in (d/2,\infty)$.
Then we have
\begin{multline}
								\label{eq110412_06}
\int_{\{u>k\}\cap\Omega} |\zeta \nabla u|^2 \, dx
\le N \int_{\{u>k\}\cap\Omega} |\zeta_x|^2 (u-k)^2 \, dx
\\
+ N \left(\int_{\{u>k\}\cap\Omega} \zeta^{2\bar q} \, dx\right)^{1/\bar q}
\left( \int_{\{u>k\}\cap\Omega} \left(\varphi_0 + \varphi_1^2 + \varphi_2\right)^q \, dx \right)^{1/q},	
\end{multline}
where $\varphi_i$, $i=0,1,2$, are those in \eqref{eq0328_01}, $q = \min\{\sigma/2, \tau\}$, $1/\bar q + 1/q = 1$, and $N=N(\mu,\mu_1,\mu_2)$.

Now if $k \le u + 1$ on the support of $\zeta$, then
\begin{multline}
									\label{eq110413_20}
\int_{\{u<k\}\cap\Omega} |\zeta \nabla u|^2 \, dx
\le N \int_{\{u<k\}\cap\Omega} |\zeta_x|^2 (k-u)^2 \, dx
\\
+ N \left(\int_{\{u<k\}\cap\Omega} \zeta^{2\bar q} \, dx\right)^{1/\bar q}
\left( \int_{\{u<k\}\cap\Omega} \left(\varphi_0 + \varphi_1^2 + \varphi_2\right)^q \, dx \right)^{1/q}.	
\end{multline}
\end{lemma}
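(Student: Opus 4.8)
The plan is to test the weak formulation of \eqref{eq3.24} with $\phi = \zeta^2 (u-k)_+$. This is legitimate: since $k \ge u-1$ on the support of $\zeta$, we have $0 \le (u-k)_+ \le 1$ there, so $\phi \in W_2^1(\Omega)$ (one may assume $\zeta$ bounded, e.g. by a routine approximation), with $D_i\phi = \zeta^2 (D_i u)\,\mathbf 1_{\{u>k\}} + 2\zeta (D_i\zeta)(u-k)_+$. Substituting into $\int_\Omega (A_{ij}D_ju+a_i)D_i\phi\,dx = \int_\Omega b\,\phi\,dx$ splits the left-hand side into the ``diagonal'' term $D := \int_{\{u>k\}\cap\Omega}\zeta^2 (A_{ij}D_ju+a_i)D_iu\,dx$ and the ``cross'' term $C := \int_{\Omega} 2\zeta (A_{ij}D_ju+a_i)(D_i\zeta)(u-k)_+\,dx$, so that $D = \int_\Omega b\,\phi\,dx - C$.

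I would then bound each group of terms by the pointwise inequalities already recorded in \eqref{eq110412_02} and \eqref{eq110412_04}. The first inequality in \eqref{eq110412_02} gives $D \ge \tfrac{\mu}{2}\int_{\{u>k\}\cap\Omega}\zeta^2|\nabla u|^2\,dx - N\int_{\{u>k\}\cap\Omega}\zeta^2\varphi_0\,dx$. For $C$, using $|A_{ij}|\le\mu^{-1}$, the bound $|a_i|\le N\varphi_1$ from \eqref{eq110412_02}, and Young's inequality, one gets $|C| \le \tfrac{\mu}{8}\int_{\{u>k\}\cap\Omega}\zeta^2|\nabla u|^2\,dx + N\int_{\{u>k\}\cap\Omega}|\zeta_x|^2(u-k)^2\,dx + N\int_{\{u>k\}\cap\Omega}\zeta^2\varphi_1^2\,dx$. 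For the right-hand side, since $k\ge u-1$ on $\mathrm{supp}\,\zeta$, estimate \eqref{eq110412_04} yields $\int_\Omega b\,\phi\,dx \le \int_{\{u>k\}\cap\Omega}|b\,\zeta^2(u-k)_+|\,dx \le \tfrac{\mu}{4}\int_{\{u>k\}\cap\Omega}\zeta^2|\nabla u|^2\,dx + N\int_{\{u>k\}\cap\Omega}\zeta^2\varphi_2\,dx$, with all constants $N = N(\mu,\mu_1,\mu_2)$.

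Combining these three estimates, the $\tfrac{\mu}{8}+\tfrac{\mu}{4}$ worth of $\int\zeta^2|\nabla u|^2$ produced by $C$ and the $b$-term is absorbed into the $\tfrac{\mu}{2}$ coming from $D$, leaving the coefficient $\tfrac{\mu}{8}>0$ on the left; this gives $\int_{\{u>k\}\cap\Omega}\zeta^2|\nabla u|^2\,dx \le N\int_{\{u>k\}\cap\Omega}|\zeta_x|^2(u-k)^2\,dx + N\int_{\{u>k\}\cap\Omega}\zeta^2(\varphi_0+\varphi_1^2+\varphi_2)\,dx$, and a final application of H\"{o}lder's inequality with exponents $\bar q$ and $q$, $1/\bar q + 1/q = 1$, to the last integral produces \eqref{eq110412_06}. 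The inequality \eqref{eq110413_20} follows from the symmetric argument with the test function $\phi = \zeta^2(k-u)_+$, the same lower bound \eqref{eq110412_02}, and the second inequality in \eqref{eq110412_04}, which is valid precisely when $k \le u+1$ on $\mathrm{supp}\,\zeta$. The only genuinely delicate points are the admissibility of $\phi$ — which rests on the hypothesis $k\ge u-1$ (respectively $k\le u+1$) on $\mathrm{supp}\,\zeta$ forcing the truncation to be bounded — and the bookkeeping of which terms are absorbed into $\tfrac{\mu}{8}\int\zeta^2|\nabla u|^2$ versus routed into the source term; this last step works cleanly only because \eqref{eq110412_02} and \eqref{eq110412_04} were arranged to hold for all values of $u$, not merely for $|u|\ge 1$, so no further restriction on $k$ is needed.
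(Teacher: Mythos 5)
Your proposal is correct and follows essentially the same route as the paper: you test with $\phi = \zeta^2(u-k)_+$, split the left-hand side into the diagonal and cross terms, estimate each using \eqref{eq110412_02}, \eqref{eq110412_04}, and Young's inequality, absorb the $\frac{\mu}{8}+\frac{\mu}{4}$ contributions into the $\frac{\mu}{2}$ from the diagonal term, and finish with H\"{o}lder's inequality. The remark that the argument hinges on \eqref{eq110412_02} and \eqref{eq110412_04} holding for \emph{all} values of $u$ (not just $|u|\ge 1$) is a correct and worthwhile observation, matching the paper's setup in Section \ref{sec4}.
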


\begin{proof}
Using $\phi = \zeta^2 (u-k)_+ \in W_2^1(\Omega)$ as a test function,
we obtain
\begin{equation}
								\label{eq110412_01}
\int_{\Omega} \left(A_{ij}(x,u) D_j u + a_i(x,u)\right) \phi_{x_j} \, dx
= \int_{\Omega} b(x,u,\nabla u) \phi \, dx,
\end{equation}
which is equal to
$$
\int_{\{u>k\}\cap\Omega} \left(A_{ij} D_j u + a_i\right)\zeta^2 D_ju \, dx
= \int_{\{u>k\}\cap\Omega} b \zeta^2 (u-k) \, dx
$$
$$
- \int_{\{u>k\}\cap\Omega} \left(A_{ij} D_j u + a_i\right)2 \zeta \zeta_{x_j} (u-k) \, dx.
$$
Note that by \eqref{eq110412_02}
$$
\int_{\{u>k\}\cap\Omega} \zeta^2 \left(A_{ij} D_j u + a_i\right) D_ju \, dx
\ge \frac{\mu}{2}\int_{\{u>k\}\cap\Omega}|\zeta \nabla u|^2 \, dx
 - N \int_{\{u>k\}\cap\Omega} \zeta^2 \varphi_0 \, dx
$$
and
$$
- \int_{\{u>k\}\cap\Omega} \left(A_{ij} D_j u + a_i\right)2 \zeta \zeta_{x_j} (u-k) \, dx
\le \frac{\mu}{8}\int_{\{u>k\}\cap\Omega} |\zeta \nabla u|^2 \,dx
$$
$$
+ N \int_{\{u>k\}\cap\Omega} \left(|\zeta_x|^2 (u-k)^2 + \zeta^2 \varphi_1^2\right)\,dx.
$$
Since $k \ge u-1$ on the support of $\zeta$,
by \eqref{eq110412_04}
$$
\int_{\{u>k\}\cap\Omega} b \zeta^2 (u-k) \, dx
\le \frac{\mu}{4} \int_{\{u>k\}\cap\Omega} |\zeta \nabla u|^2 \, dx 
+ N \int_{\{u>k\}\cap\Omega}\zeta^2\varphi_2\,dx.
$$

Hence \eqref{eq110412_01} is written as
$$
\int_{\{u>k\}\cap\Omega} |\zeta \nabla u|^2 \, dx
$$
$$
\le N \int_{\{u>k\}\cap\Omega} |\zeta_x|^2(u-k)^2 \, dx
+ N \int_{\{u>k\}\cap\Omega} \zeta^2 \left(\varphi_0 + \varphi_1^2 + \varphi_2\right)\,dx,
$$
where $N=N(\mu,\mu_1,\mu_2)$.
Finally, by applying H\"{o}lder's inequality we obtain the desired inequality in the lemma.
The second assertion follows by the same reasoning as above with $\phi = \zeta^2 (k-u)_+$.
\end{proof}

\begin{proposition}
								\label{prop110411}
Let $u \in W_2^1(\Omega)$ be a solution to \eqref{eq3.24} and 
$f\in L_{\sigma}(\Omega)$, $g\in L_{\tau}(\Omega)$ for some $\sigma\in (d,\infty)$ and $\tau\in (d/2,\infty)$.
Then $u \in C^{\alpha_0}(\overline\Omega)$ and
$$
|u|_{\alpha_0,\Omega}
\le N,
$$
where $\alpha_0 \in (0,1)$ and $N$ depend only on 
the parameters for the bound of $u$ in Theorem \ref{thm110411}.
\end{proposition}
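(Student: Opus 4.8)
The plan is to reduce Proposition~\ref{prop110411} to the abstract H\"older regularity criterion of Section~\ref{HolderReg}: I will show that both $u$ and $-u$ belong to the De Giorgi--type function class described there, in the form adapted to the conormal (Neumann) setting on a Lipschitz domain with small Lipschitz constant, and then simply invoke that section's main result. The three ingredients that make this work are already available: the global bound $|u|\le M$ from Theorem~\ref{thm110411}; the Caccioppoli-type inequalities \eqref{eq110412_06} and \eqref{eq110413_20} on super- and sub-level sets, whose cutoffs $\zeta$ are \emph{compactly supported in $\bR^d$} rather than required to vanish on $\partial\Omega$ (so they are the correct objects for the conormal problem); and the integrability $\varphi_0,\varphi_2\in L_q(\Omega)$, $\varphi_1\in L_{2q}(\Omega)$ with $q=\min\{\sigma/2,\tau\}>d/2$, where $\varphi_i$ are as in \eqref{eq0328_01}.

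Concretely, I would fix $x_0\in\overline\Omega$ and $0<\rho'<\rho\le R_0$, take a standard cutoff $\zeta\in C_0^\infty(B_\rho(x_0))$ with $\zeta\equiv1$ on $B_{\rho'}(x_0)$, $0\le\zeta\le1$, $|\zeta_x|\le N/(\rho-\rho')$, and apply \eqref{eq110412_06} to any level $k$ with $\sup_{B_\rho(x_0)\cap\Omega}u-1\le k\le\sup_{B_\rho(x_0)\cap\Omega}u$, so that $k\ge u-1$ on $\operatorname{supp}\zeta\cap\Omega$ as required. Estimating its right-hand side by H\"older's inequality and writing $A_{k,\rho}=\{u>k\}\cap B_\rho(x_0)\cap\Omega$ gives
\[
\int_{A_{k,\rho'}}|\nabla u|^2\,dx
\le \frac{N}{(\rho-\rho')^{2}}\,\sup_{A_{k,\rho}}(u-k)^2\,|A_{k,\rho}|
+N\big\|\varphi_0+\varphi_1^2+\varphi_2\big\|_{L_q(\Omega)}\,|A_{k,\rho}|^{\,1-1/q},
\]
and, crucially, $1-1/q>1-2/d$, so the forcing term carries an extra measure--exponent $\varepsilon_0:=2/d-1/q>0$ beyond the critical power $1-2/d$; this is precisely the gain that upgrades local boundedness to H\"older continuity in the function class. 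Since $|u|\le M$, the factor $\|\varphi_0+\varphi_1^2+\varphi_2\|_{L_q(\Omega)}$ is bounded in terms of $M$, $d$, $\|f\|_{L_\sigma(\Omega)}$, $\|g\|_{L_\tau(\Omega)}$, $\mathrm{diam}\,\Omega$. The analogous estimate for $-u$ follows from \eqref{eq110413_20} with $\phi=\zeta^2(k-u)_+$ and $\inf_{B_\rho(x_0)\cap\Omega}u\le k\le\inf_{B_\rho(x_0)\cap\Omega}u+1$. Hence $\pm u$ lie in the relevant class with structural parameters controlled by $d,\mu,q,M$ and $\|\varphi_0+\varphi_1^2+\varphi_2\|_{L_q(\Omega)}$ --- all among the quantities listed in Theorem~\ref{thm110411}.

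Finally I would invoke the main result of Section~\ref{HolderReg} to conclude $u\in C^{\alpha_0}(\overline\Omega)$ with $|u|_{\alpha_0,\Omega}\le N$, where $\alpha_0$ and $N$ depend only on the class parameters, i.e.\ on the quantities appearing in Theorem~\ref{thm110411}. In the interior this is the classical De Giorgi--Nash--Moser oscillation decay. The part that genuinely uses the hypotheses on $\Omega$ --- and the main technical obstacle --- is the \emph{boundary} oscillation decay at $x_0\in\partial\Omega$: unlike the Dirichlet case there is no zero boundary datum to force $|A_{k,\rho}|$ small, so the De Giorgi dichotomy (for $k$ near the median of $u$ on $B_\rho(x_0)\cap\Omega$, either the super-level or the sub-level set occupies a fixed fraction of $B_\rho(x_0)$) must be run using only the geometry of $\Omega$. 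This is where the smallness of the Lipschitz constant $\beta$ enters: $\Omega\cap B_\rho(x_0)$ is then a ``fat'' Lipschitz domain on which the Sobolev--Poincar\'e inequality holds with constants independent of $\rho$ and $x_0$, and $|B_\rho(x_0)\setminus\Omega|\ge\theta\,|B_\rho(x_0)|$ for a fixed $\theta>0$. Verifying that these constants do not degenerate as $\rho\to0$ (equivalently, performing the bi-Lipschitz flattening and checking that the transformed Caccioppoli inequality keeps the same structure) is exactly what Section~\ref{HolderReg} is set up to handle; once that machinery is in place, the proof of Proposition~\ref{prop110411} is immediate.
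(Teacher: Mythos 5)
Your proposal is correct and follows essentially the same route as the paper: verify that $u$ (and $-u$) lies in the De Giorgi class $H(\Omega,M,C,1,q)$ of Section~\ref{HolderReg} using the Caccioppoli inequalities \eqref{eq110412_06}, \eqref{eq110413_20} together with $|u|\le M$ and $\varphi_0,\varphi_2\in L_q$, $\varphi_1\in L_{2q}$, $q=\min\{\sigma/2,\tau\}>d/2$, then invoke Theorem~\ref{thm110413}. Two small calibrations: the bi-Lipschitz flattening $\Phi(y)=(y_1+\varphi(y'),y')$ is carried out \emph{inside} the proof of this proposition (Section~\ref{HolderReg} only treats balls and half-balls), and it is there, via the distortion control \eqref{eq110413_03}, that the smallness of $\beta$ enters; and for the conormal problem what matters is the interior measure density of $\Omega$ in $B_\rho(x_0)$ (needed for the isoperimetric lemma on convex sets and the Poincar\'e inequality after flattening), not the complement density $|B_\rho(x_0)\setminus\Omega|\ge\theta|B_\rho(x_0)|$, which is the Dirichlet-side ingredient.
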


\begin{proof}
Let $r_0 > 0$ and $B_{r_0} \subset \Omega$.
For $r \le r_0$ and $\delta \in (0,1)$, let $B_r$ and $B_{r(1-\delta)}$ be balls concentric with $B_{r_0}$.
Let $\zeta$ be an infinitely differentiable function such that
$0 \le \zeta \le 1$,
$\zeta = 1$ on $B_{r(1-\delta)}$
and $\zeta=0$ outside $B_r$.
We may assume that $|D \zeta| \le 1/(\delta r)$.
Then from \eqref{eq110412_06} we obtain
\begin{equation}
								\label{eq110413_01}
\int_{\{u > k\} \cap B_{r(1-\delta)}} |\nabla u|^2 \, dx
\le C \left(\frac{1}{\delta^2 r^{2-d/q}} \max_{\{u > k\} \cap B_r} (u-k)^2 + 1\right)
|\{u > k\} \cap B_r|^{1-1/q}	
\end{equation}
for $B_r \subset \Omega$ and $k \ge \max_{B_r} u - 1$,
where $\delta \in (0,1)$, $q=\min\{\sigma/2,\tau\} > d/2$, and the constant $C$ depends only on the parameters for the bound of $u$ in Theorem \ref{thm110411}.
Using \eqref{eq110413_20} we also obtain \eqref{eq110401_01} for $u$.
Hence $u \in H(\Omega,M,C,1,q)$ in Definition \ref{def110401} when $\Omega = B_{r_0}$.
Therefore, we have the oscillation estimate in Theorem \ref{thm110413}, which indeed implies
\begin{equation}
								\label{eq110413_02}
\text{\rm osc}_{B_r} u
\le N \left(\frac{r}{r_0}\right)^\alpha 
\text{\rm osc}_{B_{r_0}} u  + N r_0^{\alpha_1} r^\alpha
\end{equation}
for all $r \le r_0$, where $\alpha>0$, $\alpha_1>0$, and $N>0$
depend only on $d$, $C$, and $q$.

Let $x_0 \in \partial \Omega$
and $r_0 < R_0$, where $R_0$ is from Assumption \ref{assump2}.
Without loss of generality we assume that $x_0 = 0$ and $\varphi(0) = 0$,
where $\varphi$ is a Lipschitz function 
such that $\Omega_{R_0} = \Omega \cap B_{R_0}
= \{x \in B_{R_0}: x_1 > \varphi(x')\}$.
Under this assumption, since $|\varphi(x')| \le \beta |x'|$,
we observe that
\begin{equation}
								\label{eq110413_03}
\begin{aligned}
\Phi(B_r) \subset \Omega_{r_\beta}
\quad
&\text{for}
\quad 
r < \frac{R_0}{\sqrt{2(1+\beta^2)}},\\
\Phi^{-1}(\Omega_r)
\subset B_{r_\beta}
\quad
&\text{for}
\quad 
r < \frac{R_0}{2(1+\beta^2)},
\end{aligned}
\end{equation}
where $\Phi(y) = (y_1+\varphi(y'),y')$ and $r_\beta = r \sqrt{2(1+\beta^2)}$.
Set $v(y) := u(\Phi(y))$
and
$r_1 := \frac{r_0}{\sqrt{2(1+\beta^2)}}$.
From \eqref{eq110413_03} we have
$v \in W_2^1(B_{r_1}^+)$.
For $r \in (0, r_1]$, let $\psi$ be an infinitely differentiable function such that
$0 \le \psi \le 1$, $\psi = 0$ on $B_{r(1-\delta)}$, and $\psi = 0$ outside $B_r$.
We may assume that $|D\psi| \le 1/(\delta r)$.
By using
$$
\zeta(x) = \psi(\Phi^{-1}(x)),
$$
we obtain from \eqref{eq110412_06}
$$
\int_{\{u>k\}\cap\Omega} |\zeta \nabla u|^2 \, dx
\le N \int_{\{u>k\}\cap\Omega} |\zeta_x|^2 (u-k)^2 \, dx
+ C \left(\int_{\{u>k\}\cap\Omega} \zeta^{2\bar q} \, dx\right)^{1/\bar q}
$$
for $k \ge u - 1$ on the support of $\zeta$,
where $C$ is a constant as in \eqref{eq110413_01}.
By the change of variables, this turns into
$$
\int_{\{v > k\} \cap \Omega_{r(1-\delta)}} |\nabla v|^2 \, dx
\le C \left(\frac{1}{\delta^2 r^{2-d/q}} \max_{\{v > k\} \cap \Omega_r} (v-k)^2 + 1\right)
|\{v > k\} \cap \Omega_r|^{1-1/q},
$$
where $\Omega_r = B_r^+ = \{y \in \bR^d: |y| < r, y_1 > 0\}$.
Similarly, the inequality \eqref{eq110401_01} is proved for $v$.
Hence $v \in H(\Omega,M,C,1,q)$ in Definition \ref{def110401} when $\Omega = B_{r_1}^+$.
Therefore, by Theorem \ref{thm110413}
we have
$$
\text{\rm osc}_{B_r^+} v
\le N \left(\frac{r}{r_1}\right)^\alpha
$$
for all $r \le r_1$, where $\alpha = \alpha(d,C,q)$
and $N = N(d,C,q, r_1, M)$.
This together with the definition of $v$ and \eqref{eq110413_03} shows that, for any $r < \frac{r_0}{2(1+\beta^2)}=:r_2$,
$$
\text{\rm osc}_{\Omega_r} u
\le \text{\rm osc}_{B_{r_\beta}^+} v
\le N \left(\frac{r}{r_2}\right)^\alpha.
$$
Finally, we use this inequality and \eqref{eq110413_02} to finish the proof (for details, see Theorem 8.29 in \cite{GT}).
\end{proof}

\section{$L_p$-estimates for linear equations}
                                \label{sec5}
In the proof of Theorem \ref{thm2} where we prove the global H\"{o}lder regularity result,
it is essential to use some results from $L_p$-theory for linear elliptic equations.
In this section, we consider the linear equation
\begin{equation}
                                \label{eq22.18}
\left\{
  \begin{aligned}
    - D_i(a_{ij} D_j v) + \lambda v &= D_i h_i + h \quad \text{in} \,\, \Omega, \\
    \left(a_{ij}D_j v + h_i\right) \nu_i&= 0 \quad \text{on} \,\, \partial\Omega,
  \end{aligned}
\right.
\end{equation}
where $\nu$ is the outward normal vector to the surface $\partial \Omega$ and $\lambda > 0$, 
and present some $L_p$-solvability as well as $L_p$-estimates necessary to the proof of Theorem \ref{thm2}.

We assume that the leading coefficients $a_{ij}$ have small mean oscillations with respect to $x \in \bR^d$. To describe this assumption, we set
$$
a_R^{\#} = \sup_{1 \le i,j \le d}
\sup_{\substack{x_0 \in \bR^d\\r \le R}}\dashint_{B_r(x_0)}\dashint_{B_r(x_0)} |a_{ij}(x) - a_{ij}(y)| \, dx \, dy.
$$
Assume that $|a_{ij}(x)| \le \mu^{-1}$ and $a_{ij}(x)\xi_i\xi_j \ge \mu |\xi|^2$
for all $\xi \in \bR^d$ and $x \in \bR^d$.
Also we assume
\begin{assumption}[$\rho_1$]
								\label{liSMO}
There is a constant $R_1 \in (0,1]$ such that $a_{R_1}^{\#} \le \rho_1$.
\end{assumption}

We use the following result from \cite{DongKim08a}.
By a half space, we mean, for example, $\bR^d_+=\{x\in\bR^d:x_1>0\}$.
\begin{proposition}
								\label{prop7.1}
Let $\Omega$ be the whole space $\bR^d$, a half space, or a bounded Lipschitz domain.
Let $\hat{p} \in  (2,\infty)$, $p\in [\hat{p}/(\hat{p}-1), \hat{p}]$, $h_i \in L_p(\Omega)$, and $h \in L_p(\Omega)$.
\begin{enumerate}
\item If $\Omega$ is the whole space $\bR^d$ or a half space $\bR^d_+$,
there exists a positive $\rho_1=\rho_1(d,\mu,\hat{p})$
such that, under Assumption \ref{liSMO} ($\rho_1$),
there is a unique $v \in W_p^1(\Omega)$
satisfying \eqref{eq22.18} and
\begin{equation}
                                    \label{eq22.20b}
\sqrt{\lambda}\|v_x\|_{L_p(\Omega)}
+ \lambda \|v\|_{L_p(\Omega)} \le N \sqrt{\lambda}\|h_i\|_{L_p(\Omega)}
+ N \|h\|_{L_p(\Omega)}
\end{equation}
provided that $\lambda \ge \lambda_0$,
where $N > 0$ and $\lambda_0 > 0$ are constants depending only on $d$, $\mu$, $\hat{p}$, 
and  $R_1$. 

\item If $\Omega$ is a bounded Lipschitz domain,
there exist positive $\beta=\beta(d,\mu,\hat{p})$ and $\rho_1=\rho_1(d,\mu,\hat{p})$
such that, under Assumption \ref{assump2} ($\beta$) and Assumption \ref{liSMO} ($\rho_1$),
there is a unique $v \in W_p^1(\Omega)$
satisfying \eqref{eq22.18} and \eqref{eq22.20b}
provided that $\lambda \ge \lambda_0$,
where $N > 0$ and $\lambda_0 \ge 0$ are constants depending only on $d$, $\mu$, $\hat{p}$, $R_0$, $R_1$, and $\text{\rm diam}\Omega$.
\end{enumerate}
\end{proposition}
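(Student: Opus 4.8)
The plan is to follow the mean‑oscillation (sharp‑function) method for divergence‑form equations with small‑BMO coefficients, adapted to the conormal boundary condition; this is the argument of \cite{DongKim08a}, so we only indicate its structure. One first disposes of the case $p=2$: for $\lambda\ge\lambda_0$ large the bilinear form $v,\phi\mapsto\int_\Omega\big(a_{ij}D_jv\,D_i\phi+\lambda v\phi\big)\,dx$ is bounded and coercive on $W_2^1(\Omega)$ by the ellipticity assumed for \eqref{eq22.18}, so Lax--Milgram produces a unique weak solution together with \eqref{eq22.20b} for $p=2$ (on a bounded Lipschitz domain no compatibility condition is needed precisely because $\lambda>0$). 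By the dilation $x\mapsto\sqrt\lambda\,x$ it suffices to treat $\lambda=1$, which also reduces the bounded‑domain case to estimates on small balls.

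Next one establishes the a priori bound \eqref{eq22.20b} for $2\le p\le\hat p$. Fix a point and a scale; on an interior ball $B_R$, or in the boundary case on a half‑ball $B_R^+$ with the homogeneous conormal condition on the flat face, let $w$ solve the equation with $a_{ij}$ frozen to its average $\bar a_{ij}$ over that ball, keeping the $\lambda w$ term and zero right‑hand side. Classical constant‑coefficient estimates give, interiorly, smoothness of $Dw$, and near the flattened boundary (after a linear change of variables preserving $\{x_1=0\}$ followed by even reflection) H\"older regularity of $Dw$ up to the boundary, hence a decay estimate
\[
\dashint_{B_{R/2}}\big|Dw-(Dw)_{B_{R/2}}\big|\,dx\le N\Big(\tfrac rR\Big)^{\nu}\Big(\dashint_{B_R}|Dw|^{2}\,dx\Big)^{1/2},\qquad r\le R/2,
\]
with some $\nu>0$. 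Testing the equation for $v-w$ against $v-w$ and using boundedness of $a_{ij}$ and H\"older's inequality,
\[
\big\|D(v-w)\big\|_{L_2(B_R)}\le N\big(a_R^{\#}\big)^{1/s}\|Dv\|_{L_{2s'}(B_R)}+N\|h_i\|_{L_2(B_R)}+N\|h\|_{L_2(B_R)}
\]
with $s'$ close to $1$. Combining these two displays and optimizing in $r/R$ gives a pointwise bound for the maximal sharp function of $Dv$,
\[
\big(Dv\big)^{\#}\le N\varepsilon\big(M|Dv|^{2s'}\big)^{1/(2s')}+N\big(M|h_i|^{2s'}\big)^{1/(2s')}+N\big(M|h|^{2s'}\big)^{1/(2s')},
\]
where $\varepsilon$ can be made as small as we wish by choosing $\rho_1$ in Assumption \ref{liSMO} (and, in the boundary case, the Lipschitz constant $\beta$ in Assumption \ref{assump2}) small.

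Then one applies the Fefferman--Stein sharp‑function theorem and the Hardy--Littlewood maximal‑function theorem in $L_{p/(2s')}$ --- admissible once $2s'<p$, which forces $s'$ close enough to $1$ relative to $\hat p$ --- and absorbs the $\varepsilon\,(M|Dv|^{2s'})^{1/(2s')}$ term into the left side, yielding \eqref{eq22.20b} for $2\le p\le\hat p$; undoing the dilation restores the powers of $\lambda$. For $p\in[\hat p/(\hat p-1),2)$ one uses duality: the conormal problem for the transposed coefficients is of the same type and still has small BMO, so the $L_{p'}$‑estimate just obtained for it gives both the $L_p$‑estimate and $L_p$‑solvability for the original equation. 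On a bounded Lipschitz domain one localizes with a finite cover of $\overline\Omega$, using interior estimates on interior balls and, on boundary balls, the flattening $x_1\mapsto x_1-\varphi(x')$ whose Jacobian and induced change of the coefficients stay within the admissible ellipticity and small‑BMO thresholds because $\beta$ is small; a partition of unity patches the local estimates, the lower‑order commutator terms being absorbed using $\lambda\ge\lambda_0$, which is also where the dependence of $N$ on $R_0$ and $\operatorname{diam}\Omega$ enters. Existence then follows from the a priori estimate by the method of continuity, deforming to the constant‑coefficient problem.

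I expect the boundary part of the second paragraph to be the main obstacle: one needs the sharp up‑to‑the‑boundary regularity of $Dw$ for the constant‑coefficient conormal problem on a half‑ball, and one must verify that flattening a merely Lipschitz boundary with small constant $\beta$ distorts neither the uniform ellipticity nor the small‑BMO bound by more than the subsequent absorption step can tolerate --- both distortions must be controlled quantitatively and uniformly at all scales $\lesssim R_0$ by $\beta$ alone.
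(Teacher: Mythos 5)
Your outline essentially re-derives the mean-oscillation/sharp-function argument of \cite{DongKim08a}; the paper does not reproduce that argument, it simply cites \cite{DongKim08a} (and \cite{DK09}) for the core $L_p$ result. What the paper's remark after the proposition actually supplies --- and what your proposal omits --- is the reason the thresholds $\rho_1$ and $\beta$ can be taken to depend only on $\hat p$ and work simultaneously for \emph{every} $p\in[\hat p/(\hat p-1),\hat p]$.

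Your sharp-function step introduces $s'>1$ subject to $2s'<p$ so that the Fefferman--Stein and Hardy--Littlewood theorems can be applied in $L_{p/(2s')}$. As $p$ decreases toward $2$ you are forced to take $s'\to 1$, hence the conjugate $s\to\infty$; the perturbation coefficient $(a_R^{\#})^{1/s}$ then tends to $1$ no matter how small $a_R^{\#}$ is, and the maximal-function constant in $L_{p/(2s')}$ blows up as well. So this route produces $\rho_1=\rho_1(d,\mu,p)$ and $\beta=\beta(d,\mu,p)$, not the claimed $\rho_1(d,\mu,\hat p)$ and $\beta(d,\mu,\hat p)$; tuning $s'$ alone cannot make these uniform over the interval. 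The paper's fix, imported from \cite{DK10}, is: prove the estimate only at the single endpoint $p=\hat p$ (where $\rho_1,\beta$ depend on $\hat p$), pass to the conjugate endpoint $p=\hat p/(\hat p-1)$ by duality with the \emph{same} $\rho_1,\beta$, and then obtain all intermediate exponents by Marcinkiewicz interpolation, which introduces no new smallness condition on the coefficients or the boundary. You invoke duality for $p<2$ but never interpolation, so uniformity in $p$ is still missing from your argument. This is not cosmetic: in the proof of Theorem \ref{thm2} the constants $\beta$ and $\rho$ are fixed once in terms of $\hat p$, after which the iteration produces exponents $p_k$ sweeping through $(2,\hat p]$, and a $p$-dependent smallness threshold would break that iteration.
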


The proposition above was proved in \cite{DongKim08a} so that the choices of $\beta$ and $\rho_1$ may be different depending on $p$. Also see \cite{DK09}. To find uniform $\rho_1$ and $\beta$ for all $p \in [\hat{p}/(\hat{p}-1),\hat{p}]$,
we use the cited result and an interpolation argument as in \cite{DK10}.
Indeed, if we have the $W_{\hat{p}}^1$
solvability of \eqref{eq22.18} for some $a_{ij}$ and $\Omega$, by the duality, the $W^1_{\hat{p}/(\hat{p}-1)}$ solvability follows. Then we apply Marcinkiewicz's  theorem to get the $W^1_p$ solvability for any $p\in [\hat{p}/(\hat{p}-1),\hat{p}]$.
	
By using Proposition \ref{prop7.1}, we derive the following theorem, where $h$ may have less integrability than those in Proposition \ref{prop7.1}.
Again, the constants $\beta$ and $\rho_1$ are found independent of $\sigma$ and $q$ as long as $\sigma$ and $q^*$ are in an a prior fixed interval.
Recall the definition of $q^*$ given above Theorem \ref{thm2}.

\begin{theorem}
								\label{liRes}
Let $\Omega$ be a bounded Lipschitz domain, $\sigma, q \in (1,\infty)$, $\hat{p} \in (2,\infty)$, $h_i \in L_{\sigma}(\Omega)$, and $h \in L_q(\Omega)$.
Assume that $\sigma, q^* \in [\hat{p}/(\hat{p}-1), \hat{p}]$.
Then there exist positive $\beta=\beta(d,\mu, \hat{p})$ and $\rho_1=\rho_1(d,\mu,\hat{p})$
such that, under Assumption \ref{assump2} ($\beta$) and Assumption \ref{liSMO} ($\rho_1$),
there is a unique $v \in W_p^1(\Omega)$
satisfying \eqref{eq22.18} and
$$
\|v\|_{W_p^1(\Omega)} \le N \|h_i\|_{L_\sigma(\Omega)}
+ N \|h\|_{L_q(\Omega)},
$$
provided that $\lambda \ge \lambda_0$,
where $p:= \min \{ \sigma, q^* \}$ and
\begin{equation}
							\label{eq110509}							
\lambda_0 = \lambda_0(d, \mu, \hat{p}, R_0, R_1, \text{\rm diam}\Omega) \ge 0,
\,\,
N = N(d,\mu, \hat{p}, \sigma, q, q^*, R_0, R_1, \lambda, \text{\rm diam}\Omega) > 0.	
\end{equation}
\end{theorem}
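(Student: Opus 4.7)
The plan is to combine two applications of Proposition \ref{prop7.1}---one at exponent $p=\min\{\sigma,q^*\}$ and one at the conjugate exponent $p'$---via a duality argument that absorbs the gap between the $L_\sigma$-integrability of the divergence data and the weaker $L_q$-integrability of the source. Since $\sigma,q^*\in[\hat p/(\hat p-1),\hat p]$ and this interval is closed under minima and under the involution $x\mapsto x/(x-1)$, both $p$ and $p'$ lie in $[\hat p/(\hat p-1),\hat p]$. The interpolation remark immediately preceding the theorem allows a single pair $(\beta,\rho_1)$, depending only on $d$, $\mu$, and $\hat p$, to be used uniformly at both exponents; and the transpose $a_{ji}$ of $a_{ij}$ satisfies the same ellipticity and small-BMO bounds, so Proposition \ref{prop7.1} applies equally well to the adjoint operator.

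For \emph{existence}, I would approximate $h$ by smooth functions $h^{(n)}\to h$ in $L_q(\Omega)$. Because $\Omega$ is bounded and $p\le\sigma$, both $h_i\in L_\sigma(\Omega)\subset L_p(\Omega)$ and $h^{(n)}\in L_p(\Omega)$, so Proposition \ref{prop7.1} at exponent $p$ produces a unique $v^{(n)}\in W^1_p(\Omega)$ solving \eqref{eq22.18} with right-hand side $D_i h_i + h^{(n)}$. The estimate furnished directly by Proposition \ref{prop7.1} controls $\|v^{(n)}\|_{W^1_p}$ in terms of $\|h^{(n)}\|_{L_p}$, which is not uniformly bounded in $n$; the heart of the proof is to upgrade this to
\[
\|v^{(n)}\|_{W^1_p(\Omega)} \le N\bigl(\|h_i\|_{L_\sigma(\Omega)} + \|h^{(n)}\|_{L_q(\Omega)}\bigr),
\]
with $N$ independent of $n$. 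Given this bound, $\{v^{(n)}\}$ is Cauchy in $W^1_p(\Omega)$ by applying the same estimate to differences, and its limit $v$ is the desired solution; uniqueness follows by applying the estimate to the difference of two solutions.

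The sharper estimate is established by duality. To bound $\|v^{(n)}\|_{L_p}$, for each $\phi\in L_{p'}(\Omega)$ I would use Proposition \ref{prop7.1} at exponent $p'$ to solve the adjoint Neumann problem $-D_i(a_{ji}D_j w)+\lambda w=\phi$, $(a_{ji}D_j w)\nu_i=0$, obtaining $w\in W^1_{p'}$ with $\|w\|_{W^1_{p'}}\le N\|\phi\|_{L_{p'}}$. Testing \eqref{eq22.18} against $w$ and rewriting $\int v^{(n)}\phi$ by means of the adjoint equation and integration by parts (the adjoint Neumann condition kills the boundary term) yields
\[
\int_\Omega v^{(n)}\phi\,dx = \int_\Omega h_i D_i w\,dx + \int_\Omega h^{(n)} w\,dx.
\]
H\"older combined with the embeddings $L_{p'}\hookrightarrow L_{\sigma'}$ (valid on bounded $\Omega$ because $p\le\sigma$) and $W^1_{p'}\hookrightarrow L_{q'}$ (valid because $p\le q^*$ forces $(p')^*\ge q'$ when $p'<d$, with the usual modifications when $p'\ge d$) bounds the right-hand side by $N(\|h_i\|_{L_\sigma}+\|h^{(n)}\|_{L_q})\|\phi\|_{L_{p'}}$; taking the supremum over $\phi$ gives the $L_p$-bound. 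The same scheme with a vector field $\Psi\in L_{p'}(\Omega)$ and adjoint source $-D_i\Psi_i$ (i.e., divergence-type data in Proposition \ref{prop7.1} at $p'$) produces $\int D_i v^{(n)}\Psi_i = -\int h_i D_i w - \int h^{(n)} w$, yielding the analogous bound for $\|Dv^{(n)}\|_{L_p}$.

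The main obstacle I anticipate is the bookkeeping required to ensure that the thresholds $\beta$ and $\rho_1$ can be chosen uniformly for the applications of Proposition \ref{prop7.1} at $p$ and $p'$, and that the Sobolev embeddings invoked in the duality remain valid uniformly across the regimes $p'<d$, $p'=d$, $p'>d$; both are routine case analyses that crucially exploit the boundedness of $\Omega$ and the fact that the admissible interval of exponents is stable under conjugation.
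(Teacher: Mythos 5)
Your argument is correct in outline, but it proves the theorem by a genuinely different route than the paper. The paper also splits off the divergence part $D_ih_i$ and handles it directly with Proposition \ref{prop7.1} at exponent $p\le\sigma$, but for the source term $h$ it does \emph{not} use duality: it reduces to $\Omega=\bR^d$ and $\Omega=\bR^d_+$ by a partition of unity, gains one derivative by solving $-\Delta w+\lambda w=h$ (with an even reflection of $h$ in $x_1$ on the half space so that $w_{x_1}=0$ on the boundary and the conormal condition is preserved), so that $Dw\in L_{q^*}$, and then writes the remainder $\hat w=v-w$ as the solution of an equation with divergence-form data $(a_{ij}-\delta_{ij})D_jw$, to which Proposition \ref{prop7.1} at exponent $q^*$ applies. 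Your transposition argument instead works directly on the bounded domain: approximate $h$ in $L_q$, solve at exponent $p$, and upgrade the estimate by testing against adjoint solutions at the conjugate exponent $p'$, using $L_{p'}\hookrightarrow L_{\sigma'}$ and $W^1_{p'}\hookrightarrow L_{(p')^*}\subset L_{q'}$ (the inequality $(p')^*\ge q'$ being exactly equivalent to $p\le q^*$). Both routes rest on the same two pillars --- Proposition \ref{prop7.1} with $\beta,\rho_1$ chosen uniformly over $[\hat p/(\hat p-1),\hat p]$, and a one-derivative gain to convert $L_q$ data into $L_{q^*}$ data --- but you realize the gain on the dual side rather than on the data side. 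What the paper's construction buys is that it avoids invoking the adjoint problem and any density bookkeeping in the $W^1_p$--$W^1_{p'}$ pairing, and it produces the whole-space and half-space statements along the way; what your argument buys is that it bypasses the reflection and partition-of-unity machinery entirely (the paper's remark after the theorem explains that the Newtonian-potential shortcut of the Dirichlet case is unavailable here, which is precisely the obstruction your duality sidesteps). Two small points to tidy up if you write this out: the sign in your identity $\int_\Omega v^{(n)}\phi\,dx=\int_\Omega h_iD_iw\,dx+\int_\Omega h^{(n)}w\,dx$ depends on the sign convention for $D_ih_i$ in the weak formulation (there are no boundary terms to ``kill''; the conormal condition is already encoded in the weak form, and the cancellation comes from $\int a_{ij}D_jv\,D_iw=\int a_{ji}D_jw\,D_iv$), and you should justify that $w\in W^1_{p'}$ is an admissible test function for the $W^1_p$ solution (and vice versa) by a density argument, since the weak formulation in the paper is stated with $W^1_2$ test functions.
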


\begin{proof}
We split the equation \eqref{eq22.18} into two linear equations with $h \equiv 0$ and $h_i \equiv 0$, $i=1,\cdots,d$, respectively.
Since $h_i \in L_p(\Omega) \subset L_\sigma(\Omega)$ and $p \in [\hat{p}/(\hat{p}-1), \hat{p}]$, by Proposition \ref{prop7.1}, 
we have constants $\beta$ and $\rho_1$, depending only on $d$, $\mu$, and $\hat{p}$,
such that, under Assumption \ref{assump2} ($\beta$) and Assumption \ref{liSMO} ($\rho_1$), there exists a unique solution $v \in W_p^1(\Omega)$ to the equation \eqref{eq22.18} with $h \equiv 0$ satisfying
$$
\|v\|_{W_p^1(\Omega)} \le N \|h_i\|_{L_\sigma(\Omega)},
$$
provided that $\lambda \ge \lambda_0$, where $\lambda_0$ and $N$ depend only on the parameters in \eqref{eq110509}.

Now let $h_i \equiv 0$, $i=1,\cdots,d$.
Thanks to the localization argument using a partition of unity, it is enough to show the uniqueness solvability in $W_{q^*}^1(\Omega)$ of the equation \eqref{eq22.18} along with the following estimate
when $\Omega = \bR^d$ and $\Omega = \bR^d_+$:
\begin{equation}
							\label{eq110509_02}
\|v\|_{W_{q^*}^1(\Omega)} \le N \|h\|_{L_q(\Omega)}.
\end{equation}

In case $\Omega = \bR^d$,
since $h \in L_q(\Omega)$, we find a unique solution $w \in W^2_q(\bR^d)$ to the equation
$$
-\Delta w + \lambda w = h \quad \text{in} \quad \Omega
$$
satisfying
$$
\|w\|_{W_q^2(\Omega)}
\le N \|h\|_{L_q(\Omega)},
$$
where $\lambda > 0$ and $N=N(d,q,\lambda)$.
From the above inequality and the Sobolev imbedding theorem, we know that $w \in W^1_{q^*}(\Omega)$ and
\begin{equation}
                                \label{eq22.10}
\|w\|_{W_{q^*}^1(\Omega)}\le N
\|h\|_{L_q(\Omega)}.
\end{equation}
Since $q^* \in [\hat{p}/(\hat{p}-1),\hat{p}]$, by Proposition \ref{prop7.1} we have $\rho_1 = \rho_1(d, \mu,\hat{p})>0$ such that, under Assumption \ref{liSMO} ($\rho_1$), there is a unique solution $\hat w \in W^1_{q^*}(\Omega)$ to the equation
$$
-D_i(a_{ij}D_j \hat w) + \lambda \hat w =D_i((a_{ij}-\delta_{ij})D_j w) \quad \text{in}
\quad \Omega
$$
satisfying
\begin{equation}
							\label{eq110509_01}
\|\hat w\|_{W^1_{q*}(\Omega)}\le N\|Dw \|_{L_{q^*}(\Omega)}, 
\end{equation}
provided that $\lambda \ge \lambda_0$,
where $\lambda_0 = \lambda_0(d,\mu,\hat{p},R_1)$ and $N= N(d,\mu,\hat{p}, R_1, \lambda)$.
Clearly $v:=w +\hat w\in W^1_{q^*}(\Omega)$ is a unique solution to \eqref{eq22.18} when $h_i \equiv 0$, $i=1,\cdots,d$. By \eqref{eq110509_01} and \eqref{eq22.10} the solution $v$ satisfies \eqref{eq110509_02}.

In the case that $\Omega=\bR^d_+$, let $w$ be the unique $W^2_q(\bR^d)$ solution to $-\Delta w + \lambda w =\bar h$ in $\bR^d$, where $\bar h$ is the even extension of $h$ with respect to $x_1$. Clearly $w_{x_1}=0$ on $\partial \bR^d$, and as before we know that $w\in W^1_{q^*}(\Omega)$ and satisfies \eqref{eq22.10}. Now we argue as in the previous case. In particular, note that $v:=w+\hat w$ satisfies the boundary condition in \eqref{eq22.18}.
\end{proof}

\begin{remark}
A Dirichlet problem version of the above theorem is proved in \cite{Pala09}, where an $F \in L_{q^*}(\Omega)$ satisfying $\Div F = h$ is found directly, thanks to the Dirichlet boundary condition, by using a Newtonian potential.
Here, since we have the conormal derivative boundary condition, the argument in \cite{Pala09} is not applicable. Instead, we have gone through the interior estimates (when $\Omega = \bR^d$), the boundary estimates (when $\Omega = \bR^d_+$), and the well-known partition of unity argument.
In the above theorem as well as Proposition \ref{prop7.1} for a bounded Lipschitz domain $\Omega$, the $\lambda_0$ can be made equal to zero. (If $\lambda=0$ in \eqref{eq22.18} we need $\int_\Omega h \, dx = 0$.) See Section 7 in \cite{DongKim08a}.
However, we do not pursue this direction here.
\end{remark}

\section{Proof of Theorem \ref{thm2}}
							\label{pfmainth}

Under the assumptions in Theorem \ref{thm2},
Proposition \ref{prop110411} says that $u$ is globally H\"{o}lder continuous on $\Omega$.
Then one can have an extension $\bar u (x)$ on $\bR^d$ of $u(x)$ such that $\bar u (x)$ is H\"{o}lder continuous on $\bR^d$ with the same H\"{o}lder exponent.
Now we define
$$
a_{ij}(x) := A_{ij}(x,\bar u(x)).
$$
Also define
$$
h_i(x) := a_i(x,u(x)),
\quad
h(x) := b(x,u(x),\nabla u(x)).
$$
Then the equation \eqref{eq3.24} turns into
\begin{equation}							 \label{Linearized}
\left\{
  \begin{aligned}
    - D_i(a_{ij} D_ju) &= h_i(t,x)) + h(t,x) \quad \text{in} \quad \Omega, \\
    (a_{ij}D_jv + h_i) \nu_i &= 0 \quad \text{on} \quad \partial \Omega,
  \end{aligned}
\right.
\end{equation}
where $\nu$ is the outward normal vector to the surface $\partial \Omega$.
Note that
\begin{equation}
								\label{eq0212}
|h_i(x)| \le \mu_1( |u|^{\gamma/2} + f)
\le \mu_1( M + f) \in L_{\sigma}(\Omega),
\end{equation}
where $M$ is from Theorem \ref{thm110411},
and
\begin{equation}
								\label{eq0211}
|h(x)| \le \mu_2( |\nabla u|^{2(1-1/\gamma)}+|u|^{\gamma-1} + g).	
\end{equation}
The coefficients $a_{ij}$ in \eqref{Linearized} satisfy, for any $x_0 \in \bR^d$,
$$
\dashint_{x,y\in B_r(x_0)}
| a_{ij}(x) - a_{ij}(y) | \, dx \, dy
$$
$$
= \dashint_{x,y \in B_r(x_0)}
| A_{ij}(x,\bar u(x)) - A_{ij}(y,\bar u(y)) | \, dx \, dy
$$
$$
\le \dashint_{x\in B_r(x_0)}
| A_{ij}(x,\bar u(x)) - A_{ij}(x,\bar u(x_0)) | \, dx
$$
$$
+ \dashint_{x,y \in B_r(x_0)}
| A_{ij}(x, \bar u(x_0)) - A_{ij}(y,\bar u(x_0)) | \, dx \, dy
$$
$$
+ \dashint_{y \in B_r(x_0)}
| A_{ij}(y, \bar u(x_0)) - A_{ij}(y,\bar u(y)) | \, dy
\le 2\omega(N r^{\alpha_0}) + A^{\#}_r,
$$
where the last inequality is due to Assumption \ref{SMO} and Proposition \ref{prop110411}.
That is, by using the notation in Section \ref{sec5}, we have
$$
a_R^{\#} \le 2\omega(N R^{\alpha_0})
+ A^{\#}_R.
$$
Then by Assumptions \ref{SMO} and \ref{Aconti} there exists $R_2\in (0,R_1]$ such that
\begin{equation}
								\label{eq0224}
a_{R_2}^{\#} \le 2\rho,
\end{equation}
where $R_2$ depends on the function $\omega$.

\begin{proof}[Proof of Theorem \ref{thm2}]
We set $\hat{p}$ to be $\max\{\sigma,\tau^*\}$,
and fix
\begin{equation}
								\label{eq0223}
\beta = \beta(d, \hat{p}, \mu),
\quad
\rho= \frac 1 2 \rho_1(d,\hat{p},\mu),								
\end{equation}
where $\beta(d,\hat{p},\mu)$ and $\rho_1(d,\hat{p},\mu)$ are those in Theorem \ref{liRes}.
Also fix $\lambda \ge \lambda_0$, where $\lambda_0=\lambda_0(d,\mu,\hat{p},R_0,R_1,\text{\rm diam}\Omega)$
is taken from Theorem \ref{liRes}.

By Theorem \ref{thm1} there exists $p_0 > 2$ such that
$u \in W_{p_0}^1(\Omega)$.
If $p_0 \ge \min\{\sigma, \tau^*\}$, we immediately obtain \eqref{eq110405_01}.
Otherwise, we see that $u$ satisfies \eqref{Linearized}. By \eqref{eq0212} and \eqref{eq0211},
$h_i \in L_{\sigma}(\Omega)$ and $h \in L_{q_1}(\Omega)$, where
$$
q_1 = \min \left\{ \frac{\gamma}{2(\gamma-1)}p_0, \tau \right\}.
$$
By taking $\left(\frac{\gamma}{2(\gamma-1)}p_0\right)^*$ to be $\tau$ in the case that  
$$
\frac{\gamma}{2(\gamma-1)}p_0 \ge d,
$$
we see that
$$
2 < p_0 < q_1^* \le \tau^*.
$$
Indeed, it is easily verified because
\begin{equation}
								\label{eq110416_01}
q_1^* = \left(\frac{\gamma}{2(\gamma-1)}p_0\right)^*
= \frac{\gamma d p_0}{2\gamma d - 2 d - \gamma p_0} > p_0
\quad
\text{when}
\quad
\frac{\gamma}{2(\gamma-1)}p_0 < d.	
\end{equation}
Moreover, $2 < \sigma \le \hat{p}$. Hence we have
\begin{equation}
								\label{eq110416_02}
\sigma, q_1^* \in [\hat{p}/(\hat{p}-1), \hat{p}].	
\end{equation}

Set $p_1 = \min \{ \sigma, q_1^* \}$.
Then
$$
p_1 =
\left\{
\begin{aligned}
&\min\{\sigma, \tau^*\} \quad &\text{if}\quad\frac{\gamma}{2(\gamma-1)}p_0 \ge d,\\
&\min\{\sigma, \left(\frac{\gamma}{2(\gamma-1)}p_0\right)^*, \tau^*\}
\quad &\text{if}\quad\frac{\gamma}{2(\gamma-1)}p_0 < d.
\end{aligned}
\right.
$$
Observe that $u$ satisfies
$$
\left\{
  \begin{aligned}
    - D_i(a_{ij} D_j u) + \lambda u &= D_i h_i + h + \lambda u
 	\quad \text{in} \quad \Omega, \\
    \left(a_{ij}D_j v+h_i\right)\nu_i &= 0 \quad \text{on} \quad \partial \Omega,
  \end{aligned}
\right.
$$
where $a_{ij}$, $h_i$, and $h$ are those in \eqref{Linearized}.
Also observe that $u \in L_{q_1}(\Omega)$ because $\frac{\gamma}{2(\gamma-1)} < 1$.
Thus by Theorem \ref{liRes} along with \eqref{eq0224} and \eqref{eq0223} applied to \eqref{Linearized} we have $u \in W^1_{p_1}(\Omega)$ and
$$
\|u\|_{W^1_{p_1}(\Omega)}\le N\left(\|h_i\|_{L_\sigma(\Omega)}+\|h\|_{L_{q_1}(\Omega)} + \|u\|_{L_{q_1}(\Omega)}\right),	
$$
where $N=N(d,\mu,\hat{p}, \sigma, q_1, q_1^*, R_0, R_2, \lambda, \text{\rm diam}\Omega)$.
Bearing in mind the definitions of $h_i$ and $h$ as well as
using Theorem \ref{thm1}, we obtain \eqref{eq110405_01} unless
\begin{equation}
                                        \label{eq23.23}
\frac{\gamma}{2(\gamma-1)}p_0<d\quad
\text{and}
\quad
\left(\frac{\gamma}{2(\gamma-1)}p_0\right)^*< \min\{\sigma,\tau^*\}.
\end{equation}
In this case, $p_1 = q_1^*=\left(\frac{\gamma}{2(\gamma-1)}p_0\right)^*$ and, as seen in \eqref{eq110416_01}, $p_1 >p_0$.
Now, since $u \in W_{p_1}^1(\Omega)$, by \eqref{eq0211} it follows that
$$
h \in L_{q_2}(\Omega),
\quad
q_2 = \min\left\{ \frac{\gamma}{2(\gamma-1)}p_1, \tau \right\}.
$$
Note that $q_2>q_1$. We define $p_2=\min\{\sigma,q_2^*\}>p_1$. 
Then we see that \eqref{eq110416_02} is satisfied with $q_2^*$ in place of $q_1^*$
and $u \in L_{q_2}(\Omega)$.
By repeating the above argument, we obtain \eqref{eq110405_01} unless \eqref{eq23.23} holds with $p_1$ in place of $p_0$.
We continue, if necessary, repeating the above argument to obtain $p_3,p_4, \cdots$ with the recursion formula
$$
p_{k+1} = \left(\frac{\gamma}{2(\gamma-1)}p_k\right)^*
= \frac{\gamma d p_k}{2\gamma d - 2 d - \gamma p_k},
\quad
k=0,1,2,\cdots.
$$
Since
$$
p_{k+1} - p_k \ge \frac{p_0(\gamma p_0 + 2d-\gamma d)}{2\gamma d - 2d},
$$
there has to be an integer $k_0$ such that $p = p_{k_0} = \min\{\sigma, \tau^*\}$.
Note that \eqref{eq110416_02} holds true with $q_k^*$ in place of $q_1^*$ for all $k=1,\cdots,k_0$.
This allows us to use Theorem \ref{liRes} in the above iteration process with the same $\beta$ and $\rho$ in \eqref{eq0223} for all $k=1,\cdots,k_0$.
\end{proof}

\section{Functions in the class $H$}
							\label{HolderReg}

Throughout the section, the domain $\Omega$ is either $B_{r_0}$ or $B_{r_0}^+=\{x \in B_{r_0}, x_1 > 0\}$,
and $\Omega_r = \Omega \cap B_r$, where $B_r$ is concentric with $B_{r_0}$.
The results in this section are those in \cite[Chater 2, section 6]{LU}, where the interior  H\"{o}lder regularity is proved.
We slightly modified the statements in \cite{LU} so that they also work for the boundary H\"{o}lder regularity.
We also give precise parameters on which the constants in the statements depend. 
We omit here the proofs since they can be done in the same way as in \cite{LU}.

\begin{definition}
								\label{def110401}
Let $r_0 > 0$, $M>0$, $C > 0$, $\kappa > 0$, $q > d/2$ be real numbers,
and $\Omega = B_{\rho_0}$ or $\Omega = B_{\rho_0}^+$.
We say $v \in H(\Omega, M, C, \kappa, q)$ if $v \in W_2^1(\Omega)$ satisfies
$|v| \le M$ as well as the following two inequalities for any $r \in (0,r_0]$ and $\delta \in (0,1)$:
$$
\int_{\{v > k\} \cap \Omega_{r(1-\delta)}} |\nabla v|^2 \, dx
\le C \left(\frac{1}{\delta^2 r^{2-d/q}} \max_{\{v > k\} \cap \Omega_r} (v-k)^2 + 1\right)
|\{v > k\} \cap \Omega_r|^{1-1/q}
$$
for $k \ge \max_{\Omega_r}v - \kappa$, and
\begin{equation}
								\label{eq110401_01}
\int_{\{v < k\} \cap \Omega_{r(1-\delta)}} |\nabla v|^2 \, dx
\le C \left(\frac{1}{\delta^2 r^{2-d/q}} \max_{\{v < k\} \cap \Omega_r} (k-v)^2 + 1\right)
|\{v < k\} \cap \Omega_r|^{1-1/q}
\end{equation}
for $k \le \min_{\Omega_r}v + \kappa$.
\end{definition}

The following lemma is Lemma 2.3.5 in \cite{LU} with $\Omega = B_r$.
As noted there, it also works for any convex domains.

\begin{lemma}
Let $\Omega = B_r$ or $\Omega = B_r^+$.
Then for an arbitrary function $v$ in $W_1^1(\Omega)$
and for arbitrary $k$ and $l$ such that $k \le l$,
$$
(l-k) |\{ v > l \} \cap \Omega |^{1-1/d}
\le N \frac{r^d}{|\{v \le k\} \cap \Omega|}
\int_{\{ k < v \le l\} \cap \Omega}|\nabla v| \, dx,
$$
where $N = N(d)$.
\end{lemma}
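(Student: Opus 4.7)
The plan is to apply the Sobolev--Poincaré inequality on the convex domain $\Omega$ (either $B_r$ or $B_r^+$) to the truncation
$$
w := \min\bigl((v-k)_+, l-k\bigr),
$$
which vanishes on $\{v \le k\} \cap \Omega$, equals $l-k$ on $\{v > l\} \cap \Omega$, and satisfies $\nabla w = \nabla v\,\mathbf{1}_{\{k < v \le l\}}$. The two useful features of $w$ are that $\|w\|_{L^{d/(d-1)}(\Omega)}$ dominates $(l-k)\,|\{v > l\} \cap \Omega|^{(d-1)/d}$ from below, while $\|\nabla w\|_{L^1(\Omega)} = \int_{\{k < v \le l\} \cap \Omega} |\nabla v|\,dx$. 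So it will suffice to control $\|w\|_{L^{d/(d-1)}(\Omega)}$ by $\|\nabla w\|_{L^1(\Omega)}$ with the claimed dependence on the size of the zero set of $w$.

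First I would invoke the Sobolev--Poincaré inequality
$$
\|w - \bar w\|_{L^{d/(d-1)}(\Omega)} \le C(d)\,\|\nabla w\|_{L^1(\Omega)},
$$
where $\bar w = |\Omega|^{-1}\int_\Omega w\,dx$. The constant is purely dimensional here because both $B_r$ and $B_r^+$ are dilations of the fixed unit ball and unit half-ball, both convex and Lipschitz. Restricting the left-hand side to the zero set $\{v \le k\}\cap \Omega$, on which $w - \bar w = -\bar w$, gives
$$
\bar w \le \frac{C(d)}{|\{v \le k\}\cap \Omega|^{(d-1)/d}}\,\|\nabla w\|_{L^1(\Omega)}.
$$

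Combining this control of $\bar w$ with the triangle inequality $\|w\|_{L^{d/(d-1)}(\Omega)} \le \|w - \bar w\|_{L^{d/(d-1)}(\Omega)} + \bar w\,|\Omega|^{(d-1)/d}$ and $|\Omega| \le C(d)\, r^d$ yields
$$
\|w\|_{L^{d/(d-1)}(\Omega)} \le \frac{C(d)\, r^{d-1}}{|\{v \le k\}\cap \Omega|^{(d-1)/d}}\,\|\nabla w\|_{L^1(\Omega)}.
$$
To match the form in the statement, I would use the crude bound $|\{v \le k\}\cap \Omega|^{1/d} \le C(d)\, r$ (since the set lies in $\Omega \subset B_r$) to replace $r^{d-1}/|\{v \le k\}\cap \Omega|^{(d-1)/d}$ by $C(d)\,r^d/|\{v \le k\}\cap \Omega|$, and then apply $w \ge (l-k)\,\mathbf{1}_{\{v > l\}\cap\Omega}$ on the left. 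I do not anticipate a substantive obstacle; the only step requiring mild care is the $r$-independence of the Sobolev--Poincaré constant on $B_r^+$, which follows by scaling from the fixed unit half-ball.
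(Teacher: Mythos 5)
Your proof is correct, and it is self-contained where the paper simply defers to Lemma 2.3.5 of Ladyzhenskaya--Ural'tseva. The route is genuinely different from the classical one: the LU argument integrates the two-point representation $v(x)-v(y)=\int_0^1 \nabla v\cdot(x-y)\,dt$ over pairs $(x,y)\in\{v>l\}\times\{v\le k\}$ along chords of the convex domain, which localizes the gradient to the transition set $\{k<v\le l\}$ after truncation and produces the factor $r^d/|\{v\le k\}\cap\Omega|$ directly from the Riesz-potential estimate $\int_{\{v>l\}}|x-y|^{1-d}\,dx\le C(d)|\{v>l\}|^{1/d}$. You instead apply the $L^{d/(d-1)}$ Sobolev--Poincar\'e inequality to the truncation $w=\min((v-k)_+,l-k)$ and recover the dependence on $|\{v\le k\}\cap\Omega|$ by testing the deviation $w-\bar w$ on the zero set of $w$; this is cleaner and avoids potentials, at the cost of needing the Poincar\'e constant to be purely dimensional, which you correctly justify by scaling since only the fixed unit ball and unit half-ball occur (for a general convex domain, as in the LU formulation, the chord argument is the one that gives a constant depending only on $d$ and the ratio $(\operatorname{diam}\Omega)^d/|\Omega|$, whereas the Poincar\'e constant can degenerate with eccentricity). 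All the individual steps check out: $w\in W^1_1(\Omega)$ with $\nabla w=\nabla v\,\mathbf{1}_{\{k<v\le l\}}$, the lower bound $\|w\|_{L^{d/(d-1)}}\ge(l-k)|\{v>l\}\cap\Omega|^{1-1/d}$, the bound $\bar w\le C(d)|\{v\le k\}\cap\Omega|^{-(d-1)/d}\|\nabla w\|_{L^1}$, and the conversion $r^{d-1}|\{v\le k\}\cap\Omega|^{-(d-1)/d}\le C(d)\,r^d|\{v\le k\}\cap\Omega|^{-1}$ using $|\{v\le k\}\cap\Omega|\le C(d)r^d$.
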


\begin{lemma}
Let $v \in H(\Omega, M, C, \kappa, q)$.
Then there exists a $\theta_1 = \theta_1(d,C,q) > 0$ such that,
for any $\Omega_r \subset \Omega$
and for any number $k \ge \max_{\Omega_r}u(x) - \kappa$,
the inequality
$$
| \Omega_r \cap \{ v>k \}| \le \theta_1 r^d	
$$
implies
$$
| \Omega_{r/2} \cap \{ v>k+K/2 \}|=0,
$$
provided that
$$
K = \max_{\Omega_r}u - k \ge r^{1-\frac{d}{2q}}.
$$	
\end{lemma}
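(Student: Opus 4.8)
The plan is to mimic the classical De Giorgi level-set iteration used for Lemma 2.3.6 in \cite{LU}. Fix $\Omega_r \subset \Omega$ and a level $k \ge \max_{\Omega_r} u - \kappa$ with $K = \max_{\Omega_r} u - k \ge r^{1-\frac{d}{2q}}$. For $n = 0,1,2,\dots$ set the nested radii $r_n = \frac r2(1 + 2^{-n})$ (so $r_0 = r$, $r_n \downarrow r/2$), the increasing levels $k_n = k + K(1 - 2^{-n})$ (so $k_0 = k$, $k_n \uparrow k+K$), and let $y_n = |\Omega_{r_n} \cap \{v > k_n\}|$. The goal is to show $y_n \to 0$ under the smallness hypothesis $y_0 \le \theta_1 r^d$, for $\theta_1$ chosen later; then $y_\infty = |\Omega_{r/2} \cap \{v > k+K/2\}| = \lim y_n = 0$ gives the conclusion (note $K/2 \le K$, so the claimed level $k+K/2$ is covered once we control $k+K$).

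The engine is a recursive inequality $y_{n+1} \le C b^n y_n^{1+\varepsilon}$ for suitable $b>1$, $\varepsilon>0$, $C$. To derive it, first use the truncation $w_n = (v - k_{n+1})_+$ together with the Sobolev inequality on $\Omega_{r_{n+1}}$ and Hölder to bound $\int_{\Omega_{r_{n+1}} \cap \{v>k_{n+1}\}} w_n^2\,dx$ by $\|\nabla w_n\|_{L_2(\Omega_{r_{n+1}})}^2 \cdot |\Omega_{r_{n+1}} \cap \{v>k_{n+1}\}|^{2/d}$, plus a lower-order term. The gradient term is estimated by the defining inequality of the class $H$ applied at radius $r_{n+1}$ with the cutoff fraction $\delta_n = r_{n+1}/r_n - \cdots$ comparable to $2^{-n}$: since $k_{n+1} \ge k \ge \max_{\Omega_{r_n}} v - \kappa$ and $\max_{\Omega_{r_{n+1}}}(v - k_{n+1}) \le K 2^{-(n+1)}$, we get
$$
\int_{\{v>k_{n+1}\}\cap\Omega_{r_{n+1}(1-\delta_n)}} |\nabla v|^2\,dx
\le C\Big(\frac{2^{2n}}{r^{2-d/q}} K^2 2^{-2n} + 1\Big) y_n^{1-1/q}
\le C'(r^{-(2-d/q)}K^2 + 1) y_n^{1-1/q}.
$$
Here the hypothesis $K \ge r^{1-\frac d{2q}}$ makes $r^{-(2-d/q)}K^2 \ge 1$, so the parenthesis is $\le 2 r^{-(2-d/q)}K^2$, absorbing the stray $+1$. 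On the other hand $y_{n+1}$ is bounded below by the same integral $\int w_n^2$ divided by $(k_{n+1}-k_n)^2 = K^2 2^{-2(n+1)}$, because $v - k_n \ge k_{n+1}-k_n$ on $\{v > k_{n+1}\}$. Chaining these, the $K^2$ factors cancel and one lands on
$$
y_{n+1} \le N(d,C,q)\, 4^{n}\, \big(r^{-d} y_n\big)^{1/d}\, y_n^{1-1/q} \cdot y_n
= N\, 4^{n} r^{-d/d}\, y_n^{1 + 1/d - 1/q} \cdot(\text{harmless }r\text{ powers}),
$$
i.e. a bound of the form $y_{n+1} \le N b^n y_n^{1+\varepsilon}$ with $b = 4$ and $\varepsilon = \varepsilon(d,q) > 0$ (using $q > d/2 \Rightarrow 1/d - 1/q$ can be negative, so one should instead track the exponent more carefully — see below); the powers of $r$ should be arranged to combine into a single factor $r^{-d\varepsilon}$, which is exactly what forces the scaling-invariant smallness threshold $y_0 \le \theta_1 r^d$.

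Finally invoke the standard fast-geometric-decay lemma (the convexity lemma, e.g. Lemma 2.5.6 in \cite{LU}, or the argument already used in Lemma \ref{lem110411} of this paper with a single exponent): if $y_{n+1} \le N b^n y_n^{1+\varepsilon}$ and $y_0 \le N^{-1/\varepsilon} b^{-1/\varepsilon^2}$, then $y_n \to 0$. Choosing
$$
\theta_1 = \theta_1(d,C,q) \quad\text{so that}\quad \theta_1 r^d \le N^{-1/\varepsilon} b^{-1/\varepsilon^2}
$$
— which is possible precisely because the right-hand side carries the matching factor $r^{d}$ once the $r$-powers above are collected — the hypothesis $y_0 \le \theta_1 r^d$ delivers $y_n \downarrow 0$, hence $|\Omega_{r/2} \cap \{v > k+K/2\}| = 0$ as claimed.

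The main obstacle is bookkeeping the exponents and powers of $r$ so that (i) the recursion genuinely has the form $y_{n+1}\le Nb^n y_n^{1+\varepsilon}$ with a strictly positive $\varepsilon$ depending only on $d$ and $q$ (this is where $q>d/2$ is used, via the Sobolev exponent $2^* = 2d/(d-2)$ beating the Hölder loss $y_n^{-1/q}$), and (ii) all residual powers of $r$ assemble into the single factor $r^{-d\varepsilon}$ that makes the smallness threshold $\theta_1 r^d$ scale-correct. One also has to handle the $\Omega = B_{r_0}^+$ case uniformly; the Sobolev inequality and the covering work identically on half-balls with the cutoffs chosen radially, so no genuinely new idea is needed there. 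The $+1$ term inside the parenthesis of the class-$H$ inequality, and the lower-order term from the Sobolev step, are both absorbed using $K \ge r^{1 - d/(2q)}$, which is the reason that hypothesis is imposed.
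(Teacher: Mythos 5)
The paper does not actually prove this lemma --- Section \ref{HolderReg} states explicitly that the proofs are omitted because they follow \cite{LU} --- so there is no author's proof to compare against; I will instead assess whether your sketch would succeed. Your overall plan (nested radii $r_n \downarrow r/2$, increasing levels $k_n$, the class-$H$ estimate giving a gradient bound, Chebyshev plus Sobolev giving a recursion $y_{n+1}\le Nb^n y_n^{1+\varepsilon}$, then the fast geometric decay lemma) is exactly the right De Giorgi machinery, and your observation that the hypotheses $q>d/2$ and $K\ge r^{1-d/(2q)}$ are what make $\varepsilon>0$ and absorb the $+1$ term is correct.

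However, there is a genuine error in the setup that would make the argument prove nothing. You take $k_n = k + K(1 - 2^{-n})$, so $k_n \uparrow k+K$, and then assert $\lim y_n = |\Omega_{r/2}\cap\{v>k+K/2\}|$ with the parenthetical justification ``$K/2\le K$, so the claimed level $k+K/2$ is covered once we control $k+K$.'' This has the set inclusion backwards: since $k+K/2 < k+K$, one has $\{v>k+K/2\}\supset\{v>k+K\}$, so controlling $\{v>k+K\}$ says nothing about $\{v>k+K/2\}$. Worse, $\lim y_n = |\Omega_{r/2}\cap\{v\ge k+K\}|$ is automatically zero because $K=\max_{\Omega_r}v-k$ and $\Omega_{r/2}\subset\Omega_r$, so your iteration would terminate at a vacuous statement. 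The fix is standard: take $k_n = k + \tfrac{K}{2}(1-2^{-n})$ so that $k_n\uparrow k+K/2$; then $k_{n+1}-k_n=\tfrac{K}{4}2^{-n}$ and the Chebyshev step and the class-$H$ gradient bound go through exactly as you describe (the $K^2$ factors still cancel). Two smaller points worth cleaning up: your sentence ``$y_{n+1}$ is bounded \emph{below} by $\int w_n^2/(k_{n+1}-k_n)^2$'' should read ``bounded \emph{above}'' (that is what Chebyshev gives, with $w_n=(v-k_n)_+$ rather than $(v-k_{n+1})_+$ in that step), and --- as you yourself note --- the displayed exponent $1+1/d-1/q$ is not positive for $q\in(d/2,d]$; tracking the Sobolev step correctly gives a factor $|\{\cdot\}|^{2/d}$, producing $\varepsilon = 2/d-1/q>0$ thanks to $q>d/2$.
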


\begin{lemma}
								\label{lem110413}
Let $v \in H(\Omega, M, C, \kappa, q)$.
Then there exists a positive integer $s=s(d,C,q)$ such that, for any $\Omega_r \subset \Omega_{4r} \subset \Omega$,
at least one of the following two inequalities holds:
$$
\text{\rm osc}_{\Omega_r} v \le 2^s r^{1-\frac{d}{2q}},
$$
$$
\text{\rm osc}_{\Omega_r} v \le \left(1- \frac{1}{2^{s-1}}\right) \text{\rm osc}_{\Omega_{4r}}v.
$$
\end{lemma}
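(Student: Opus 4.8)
\textbf{Proof strategy for Lemma \ref{lem110413}.}
The plan is to follow the classical De Giorgi oscillation-decay argument from \cite[Chapter 2]{LU}, exploiting the two previous lemmas. Fix $\Omega_r \subset \Omega_{4r} \subset \Omega$ and suppose the first alternative fails, i.e. $\text{\rm osc}_{\Omega_r} v > 2^s r^{1-\frac{d}{2q}}$ for an integer $s$ to be chosen. Since $\text{\rm osc}_{\Omega_r} v \le \text{\rm osc}_{\Omega_{2r}} v$, this a fortiori gives a lower bound on $\text{\rm osc}_{\Omega_{2r}} v$ in terms of $r^{1-\frac{d}{2q}}$, which is precisely the kind of hypothesis needed to apply the previous lemma (with radius $2r$). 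Set $M_{2r} = \max_{\Omega_{2r}} v$ and $m_{2r} = \min_{\Omega_{2r}} v$, and without loss of generality assume that the superlevel sets are the ``small'' ones, i.e. $|\Omega_{2r} \cap \{v > (M_{2r}+m_{2r})/2\}| \le \frac12 |\Omega_{2r}|$ (otherwise work with $-v$, which also lies in $H(\Omega,M,C,\kappa,q)$ by the symmetry built into Definition \ref{def110401}).

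The core of the argument is a dichotomy between ``measure-theoretic smallness already holds'' and ``we can shrink it''. First I would introduce the truncation levels $k_j = M_{2r} - 2^{-j-1}\,\text{\rm osc}_{\Omega_{2r}} v$ for $j = 0, 1, \dots, s-1$, so that $k_j \ge M_{2r} - \text{\rm osc}_{\Omega_{2r}} v \ge \max_{\Omega_{2r}} v - \kappa$ once $\text{\rm osc}_{\Omega_{2r}} v \le \kappa$ (a harmless reduction since for $\text{\rm osc}_{\Omega_{2r}} v > \kappa$ one argues directly). Applying the second lemma to $v$ on $\Omega_{2r}$ at each level $k_j$: if for \emph{some} $j$ the measure $|\Omega_{2r} \cap \{v > k_j\}|$ drops below the threshold $\theta_1 (2r)^d$, then that lemma yields $|\Omega_r \cap \{v > k_j + K_j/2\}| = 0$ with $K_j = M_{2r} - k_j$, which forces $\max_{\Omega_r} v \le k_j + K_j/2 = M_{2r} - 2^{-j-2}\,\text{\rm osc}_{\Omega_{2r}} v$, and hence
$$
\text{\rm osc}_{\Omega_r} v \le M_{2r} - m_{2r} - 2^{-j-2}\,\text{\rm osc}_{\Omega_{2r}} v \le \left(1 - 2^{-s-1}\right)\text{\rm osc}_{\Omega_{2r}} v,
$$
giving the second alternative (after relabelling $s$). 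The verification that the side condition $K_j \ge (2r)^{1-\frac{d}{2q}}$ of the second lemma holds is exactly where the failure of the first alternative, $\text{\rm osc}_{\Omega_r} v > 2^s r^{1-\frac{d}{2q}}$, is used: it guarantees $K_j \ge 2^{-j-1}\,\text{\rm osc}_{\Omega_{2r}} v \ge 2^{-j-1}\,\text{\rm osc}_{\Omega_r} v > 2^{s-j-1} r^{1-\frac{d}{2q}} \ge (2r)^{1-\frac{d}{2q}}$ for $j \le s-2$ provided $s$ is fixed large enough relative to the fixed exponent $1-\frac{d}{2q}$.

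The remaining case is that $|\Omega_{2r} \cap \{v > k_j\}| > \theta_1 (2r)^d$ for \emph{every} $j = 0,\dots,s-1$. Here I would run the energy estimate in Definition \ref{def110401} (the first displayed inequality, with $\delta = 1/2$ say) on each annular slab $\{k_j < v \le k_{j+1}\} \cap \Omega_{2r}$ together with the first lemma of this section (the isoperimetric-type inequality, applied on $B_{2r}$ or $B_{2r}^+$ with the levels $k_j \le k_{j+1}$) to obtain, for each $j$,
$$
(k_{j+1} - k_j)\,|\Omega_{r} \cap \{v > k_{j+1}\}|^{1-1/d} \le N\,\frac{(2r)^d}{|\Omega_{2r} \cap \{v \le k_j\}|}\left(\int_{\{k_j < v \le k_{j+1}\}\cap \Omega_{2r}} |\nabla v|^2\,dx\right)^{1/2} |\{k_j < v \le k_{j+1}\}\cap\Omega_{2r}|^{1/2}.
$$
The denominator is bounded below using $|\Omega_{2r}\cap\{v\le k_j\}| \ge |\Omega_{2r}| - |\Omega_{2r}\cap\{v>(M_{2r}+m_{2r})/2\}| \ge \tfrac12 |\Omega_{2r}| \sim r^d$, the gradient integral is controlled by $C$ via the defining inequality of the class $H$ (the factor $2^{-j}\,\text{\rm osc}\,v$ coming out of the $\max (v-k_j)^2$ term matches $(k_{j+1}-k_j)^2$), and then one squares and sums over $j = 0, \dots, s-1$: since the sets $\{k_j < v \le k_{j+1}\}$ are disjoint their measures sum to at most $|\Omega_{2r}|\sim r^d$, so the common lower bound $|\Omega_r \cap \{v>k_{s-1}\}| \le |\Omega_r\cap\{v>k_{j+1}\}|$ on the left and the telescoping on the right yield
$$
s\,\big|\Omega_r \cap \{v > k_{s-1}\}\big|^{2(1-1/d)} \le N\, r^{2d - d + d/q}\;(\text{absolute constant depending on }C,q),
$$
so that $|\Omega_r \cap \{v > k_{s-1}\}| \le N s^{-\frac{d}{2(d-1)}} r^d$. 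Choosing $s = s(d,C,q)$ large makes the right side $\le \theta_1 r^d$, contradicting the standing assumption that the measure stays above $\theta_1(2r)^d$ at level $k_{s-1}$ — unless $s$ is additionally chosen (as it may be) so that this forces us back into the first branch, producing the oscillation decay on $\Omega_r$.

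\textbf{Main obstacle.} The delicate point is the bookkeeping of the two competing scales: the ``geometric'' threshold $r^{1-\frac{d}{2q}}$ that appears in the side condition of the preceding lemma and in the first alternative, versus the ``measure'' threshold $\theta_1 r^d$ from the second preceding lemma. One must choose the single integer $s$ large enough to simultaneously (i) guarantee the side condition $K_j \ge (2r)^{1-d/2q}$ whenever the first alternative fails, and (ii) make the summed energy estimate beat $\theta_1 r^d$; since $1 - \frac{d}{2q} > 0$ and all constants ($\theta_1$, $N$, $q$) depend only on $d$, $C$, $q$, such an $s = s(d,C,q)$ exists, but writing the inequalities so that the $r$-powers genuinely cancel (they do, because $2-d/q$ in the energy inequality is exactly calibrated to $q > d/2$) is the step that requires care. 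Everything else is a faithful transcription of the arguments in \cite[Chapter 2, \S6]{LU}, now carried out on $\Omega_r \subset \Omega_{4r}$ with $\Omega$ either a ball or a half-ball, which is legitimate because the only geometric input — the isoperimetric inequality of the first lemma above — was already stated for both $B_r$ and $B_r^+$.
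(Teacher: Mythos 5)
Your overall strategy---truncation levels $k_j$, the isoperimetric inequality, the defining energy inequality of the class $H$, and the $\theta_1$-measure lemma---is exactly the Ladyzhenskaya--Ural'tseva (Chapter 2, \S 6) argument that the paper omits, and the observation that $-v\in H(\Omega,M,C,\kappa,q)$ handles the symmetric case is correct. However, there is a genuine scale mismatch that invalidates the energy-estimate step as you have set it up. You define the levels from $M_{2r}=\max_{\Omega_{2r}}v$ and reduce to $\text{\rm osc}_{\Omega_{2r}}v\le\kappa$, which only gives $k_j\ge\max_{\Omega_{2r}}v-\kappa$. But when you bound $\int_{\{k_j<v\le k_{j+1}\}\cap\Omega_{2r}}|\nabla v|^2$ ``via the defining inequality of the class $H$,'' the inner domain $\Omega_{r(1-\delta)}$ in Definition \ref{def110401} must contain $\Omega_{2r}$, so the outer radius must be at least $4r$ (say $\delta=1/2$ at base radius $4r$), and the admissibility condition there reads $k_j\ge\max_{\Omega_{4r}}v-\kappa$---which is \emph{not} implied by $\text{\rm osc}_{\Omega_{2r}}v\le\kappa$, because $\max_{\Omega_{4r}}v$ may exceed $M_{2r}$ by more than $\kappa$. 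The fix, and precisely what accounts for the factor $4$ between $\Omega_r$ and $\Omega_{4r}$ in the statement, is to define the levels from $\max_{\Omega_{4r}}v$ and $\text{\rm osc}_{\Omega_{4r}}v$; this then produces the conclusion with $\text{\rm osc}_{\Omega_{4r}}v$ on the right directly, without the relabelling you invoke, and the reduction is to $\text{\rm osc}_{\Omega_{4r}}v\le\kappa$.

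There is also a logical flaw in your closing step: $|\Omega_r\cap\{v>k_{s-1}\}|\le\theta_1 r^d$ does not contradict $|\Omega_{2r}\cap\{v>k_{s-1}\}|>\theta_1(2r)^d$, since $\Omega_r\subsetneq\Omega_{2r}$ and $\theta_1 r^d<\theta_1(2r)^d$, so your ``Case 2'' conclusion is not a contradiction and the case split is circular. In fact the split is unnecessary: carry $\Omega_{2r}$ (not the weaker $\Omega_r$) on the left of the isoperimetric inequality, sum in $j$ to obtain $|\Omega_{2r}\cap\{v>k_{s-1}\}|\le\theta_1(2r)^d$ for $s=s(d,C,q)$ large, and then apply the $\theta_1$-lemma at radius $2r$ to get the oscillation decay on $\Omega_r$. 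Relatedly, your verification of the side condition $K_j\ge(2r)^{1-d/2q}$ is only claimed for $j\le s-2$, yet you need it precisely at the final level $k_{s-1}$ where the $\theta_1$-lemma is applied; this is a calibration issue (enlarge the threshold in the first alternative by a fixed power of $2$ and relabel $s$), but as written the side condition is unverified at the one level that matters.
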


\begin{theorem}
								\label{thm110413}
Let $v \in H(\Omega, M, C, \kappa, q)$. 
Then for all $r \le r_0$, we have
$$
\text{\rm osc}_{\Omega_r} v
\le N \left(\frac{r}{r_0}\right)^\alpha,
$$
where 
$$
\alpha = \min\left\{-\log_4 \left(1-\frac{1}{2^{s-1}}\right), 1-\frac{d}{2q}\right\},
$$
$$
N=4^\alpha \max \left\{ \text{\rm osc}_{\Omega_{r_0}} u, 2^s r_0^{1-\frac{d}{2q}}\right\},
$$
and the number $s$ is taken from Lemma \ref{lem110413}.
\end{theorem}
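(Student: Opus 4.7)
The plan is to iterate Lemma \ref{lem110413} along the geometric sequence of radii $r_n := r_0 / 4^n$ and then interpolate to an arbitrary $r \le r_0$. Set $\omega_n := \text{\rm osc}_{\Omega_{r_n}} v$ and
\[
M_0 := \max\bigl\{\text{\rm osc}_{\Omega_{r_0}} v,\ 2^s r_0^{1 - d/(2q)}\bigr\},
\]
so that $N = 4^\alpha M_0$. For every $n \ge 1$ the inclusions $\Omega_{r_n} \subset \Omega_{4 r_n} \subset \Omega_{r_0}$ make Lemma \ref{lem110413} applicable at scale $r_n$, producing the dichotomy
\[
\omega_n \le 2^s r_n^{\,1 - d/(2q)}
\qquad \text{or} \qquad
\omega_n \le \bigl(1 - 2^{1-s}\bigr)\, \omega_{n-1}.
\]

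The main step is to prove by induction on $n$ that $\omega_n \le M_0 \cdot 4^{-n\alpha}$. The base case $n = 0$ is immediate from the definition of $M_0$. For the inductive step, if the first alternative of the dichotomy holds then, using $\alpha \le 1 - d/(2q)$,
\[
\omega_n \le 2^s r_0^{\,1 - d/(2q)} \cdot 4^{-n(1 - d/(2q))} \le M_0 \cdot 4^{-n\alpha}.
\]
If the second alternative holds, then $\alpha \le -\log_4(1 - 2^{1-s})$ is equivalent to $1 - 2^{1-s} \le 4^{-\alpha}$, so the inductive hypothesis yields
\[
\omega_n \le 4^{-\alpha} \cdot M_0 \cdot 4^{-(n-1)\alpha} = M_0 \cdot 4^{-n\alpha}.
\]

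To conclude, for any $r \le r_0$ choose the integer $n \ge 0$ with $r_{n+1} \le r \le r_n$. Monotonicity of the oscillation in the radius and the inductive bound give $\text{\rm osc}_{\Omega_r} v \le \omega_n \le M_0 \cdot 4^{-n\alpha}$, while $r \ge r_0 / 4^{n+1}$ translates into $4^{-n\alpha} \le 4^\alpha (r/r_0)^\alpha$. Combining these yields $\text{\rm osc}_{\Omega_r} v \le 4^\alpha M_0 \,(r/r_0)^\alpha = N\,(r/r_0)^\alpha$, which is the claim. There is no substantive obstacle beyond the calibration of $\alpha$: the exponent must simultaneously dominate the scaling rate $1 - d/(2q)$ coming from the first alternative of Lemma \ref{lem110413} and the geometric contraction rate $-\log_4(1 - 2^{1-s})$ coming from the second, and taking the minimum of the two, as in the statement, is exactly what makes both branches of the induction produce the same geometric decay at the common exponent $\alpha$.
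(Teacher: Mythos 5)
Your proof is correct and is exactly the standard dyadic iteration the paper has in mind when it refers the reader to \cite{LU} and omits the argument: the induction $\omega_n \le M_0\,4^{-n\alpha}$ over the radii $r_0/4^n$, with $\alpha$ calibrated so that both alternatives of Lemma \ref{lem110413} yield the same decay, followed by interpolation to arbitrary $r$, reproduces the stated constants $\alpha$ and $N=4^\alpha M_0$ precisely. No gaps.
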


\section{Reverse H\"{o}lder's inequality}
							\label{elliptic-sec}
							
Recall the definition of $\gamma$ in \eqref{eq110411_04}.
Also recall that, throughout the section, $\Omega$ is a bounded domain satisfying Assumption \ref{assump2}.
Let
$$
c = (u)_{B_R(x_0)} = \dashint_{B_R} u \, dx
\quad
\text{and}
\quad
q= \frac{2d}{d+2}.
$$
Then by the Poincar\'e inequality, we have
$$
\int_{B_R} |u-c|^2 \, dx \le N(d) R^{d+2-2d/q}
\left(\int_{B_R} |\nabla u|^{q} \, dx\right)^{2/q},
$$
$$
\int_{B_R} |u-c|^\gamma \, dx \le N(d,\gamma) R^{d+\gamma-\gamma d/2} \left(\int_{B_R} |\nabla u|^2 \, dx\right)^{\gamma/2}.
$$
These inequalities also hold true if $B_R$ is replaced by $B_R^+=\{|x|<R: x_1>0\}$.
As before, we write $\Omega_r(x) = \Omega \cap B_r(x)$.

\begin{lemma}
								\label{lem110417}
Let $\Omega$ be a Lipschitz domain satisfying Assumption \ref{assump2},
$u \in W_2^1(\Omega)$,
and $x_0 \in \partial \Omega$.
Then for $R \le R_0$, we have 
\begin{equation}
								\label{eq110415_01}
\int_{\Omega_r(x_0)} |u-c|^2 \, dx \le N_1 R^{d+2-2d/q}
\left(\int_{\Omega_R(x_0)} |\nabla u|^q \, dx\right)^{2/q},	
\end{equation}
$$
\int_{\Omega_r(x_0)} |u-c|^\gamma \, dx \le N_2 R^{d+\gamma-\gamma d/2} \left(\int_{\Omega_R(x_0)} |\nabla u|^2 \, dx\right)^{\gamma/2},
$$
where $r = \frac{R}{2(1+\beta^2)}$, $q= 2d/(d+2)$, $N_1=N_1(d,\beta)$, 
$N_2 = N_2(d,\gamma,\beta)$, and
$$
c = (u)_{\Omega_r(x_0)}
= \dashint_{\Omega_r(x_0)} u \, dx.
$$
\end{lemma}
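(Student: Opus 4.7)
The plan is to reduce the lemma to the half-ball Poincar\'e inequalities stated at the opening of Section \ref{elliptic-sec} by flattening the boundary piece $\partial\Omega \cap B_R(x_0)$ via the Lipschitz chart from Assumption \ref{assump2}($\beta$). As in the proof of Proposition \ref{prop110411}, after a translation and rotation I may assume $x_0 = 0$ and $\varphi(0) = 0$, and I use the map $\Phi(y) = (y_1 + \varphi(y'), y')$ together with its inverse. Its Jacobian matrix is triangular with $1$'s on the diagonal, so $|\det D\Phi| = 1$ and the operator norms of $D\Phi, D\Phi^{-1}$ are bounded by constants depending only on $\beta$.

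Set $\rho = R/\sqrt{2(1+\beta^2)}$, so that the inclusions \eqref{eq110413_03} give
$$
\Omega_r(x_0) \subset \Phi(B_\rho^+) \subset \Omega_R(x_0),
$$
since $r\sqrt{2(1+\beta^2)} = \rho$. Define $v(y) := u(\Phi(y))$ on $B_\rho^+$; then $v \in W_2^1(B_\rho^+)$. Applying the half-ball Poincar\'e inequality recalled at the beginning of the section to $v$ on $B_\rho^+$ yields
$$
\int_{B_\rho^+} |v - c_0|^2 \, dy \le N(d)\, \rho^{d+2-2d/q} \left(\int_{B_\rho^+} |\nabla v|^q \, dy\right)^{2/q},
$$
where $c_0 = (v)_{B_\rho^+}$. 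Changing variables back with $|\det D\Phi| = 1$ and using $|\nabla v(y)| \le \|D\Phi\|\, |\nabla u(\Phi(y))| \le N(\beta)\,|\nabla u(\Phi(y))|$, the right-hand side is controlled by a $\beta$-dependent multiple of $\rho^{d+2-2d/q}\bigl(\int_{\Omega_R(x_0)} |\nabla u|^q\,dx\bigr)^{2/q}$. For the left-hand side, since $c = (u)_{\Omega_r(x_0)}$ minimises $a \mapsto \int_{\Omega_r(x_0)} |u-a|^2 dx$,
$$
\int_{\Omega_r(x_0)} |u - c|^2 dx \le \int_{\Omega_r(x_0)} |u - c_0|^2 dx \le \int_{\Phi(B_\rho^+)} |u - c_0|^2 dx = \int_{B_\rho^+} |v - c_0|^2 dy.
$$
Absorbing the factor relating $\rho$ and $R$ into the constant gives \eqref{eq110415_01} with $N_1 = N_1(d,\beta)$. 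The $\gamma$-inequality is obtained in exactly the same manner from the second half-ball inequality stated at the opening of the section, giving $N_2 = N_2(d,\gamma,\beta)$.

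The proof is essentially bookkeeping; the only care needed is to choose the correct intermediate radius $\rho$ so that both inclusions from \eqref{eq110413_03} apply simultaneously and to verify that the $\det D\Phi = 1$ property keeps the measure change trivial, so that no fractional powers of $\beta$-dependent Jacobians sneak into the exponents of $R$. The main (minor) obstacle is simply keeping the two directions of the inclusion straight: the mean $c$ is defined on $\Omega_r(x_0)$ (the smaller set) while the gradient integral must live on $\Omega_R(x_0)$ (the larger set), and one must use $\Phi(B_\rho^+)$ as a buffer between them together with the minimising property of the constant $c$ to replace $c_0$ by $c$.
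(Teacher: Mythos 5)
Your proof is correct and follows essentially the same route as the paper: the same flattening map $\Phi(y)=(y_1+\varphi(y'),y')$ with unit Jacobian, the same intermediate radius $\rho=R/\sqrt{2(1+\beta^2)}$ and inclusions from \eqref{eq110413_03}, the half-ball Poincar\'e inequalities, and the $L_2$-minimising property of the mean to replace $(v)_{B_\rho^+}$ by $c$. No gaps.
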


\begin{proof}
Without loss of generality we assume that $x_0 = 0$
and $\varphi(0)=0$, where $\varphi$ is a Lipschitz function 
such that $\Omega_{R_0} = \Omega \cap B_{R_0}
= \{x \in B_{R_0}: x_1 > \varphi(x')\}$.
Let
$\Phi(y) = (y_1+\varphi(y'),y')$
and
$\Phi^{-1}(x) = \Psi(x) = (x_1-\varphi(x'), x')$.
Also let $v(y) = u(\Phi(y))$.
Then by \eqref{eq110413_03} we can say $v \in W_2^1(B_{R_1})$,
where $R_1=\frac{R}{\sqrt{2(1+\beta^2)}}$.
From the Poincar\'e inequality above for a half ball it follows that
$$
\int_{B_{R_1}^+} |v-(v)_{B_{R_1}^+}|^2 \, dy
\le N R^{d+2-2d/q}\left( \int_{B_{R_1}^+} |\nabla v|^q \, dy \right)^{2/q}.
$$
Here
$$
(v)_{B_{R_1}^+} = \dashint_{B_{R_1}^+} v(y) \, dy.
$$
From this and the set inclusions in \eqref{eq110413_03} we see that
$$
\int_{\Omega_r} |u-(v)_{B_{R_1}^+}|^2 \, dx
\le \int_{ \Phi(B_{R_1}) \cap \{x_1 > \varphi(x')\} }
|u-(v)_{B_{R_1}^+}|^2 \, dx
= \int_{B_{R_1}^+}
|v-(v)_{B_{R_1}^+}|^2 \, dy
$$
$$
\le N R^{d+2-2d/q}\left( \int_{B_{R_1}^+} |\nabla v|^q \, dy \right)^{2/q}
\le N R^{d+2-2d/q}\left( \int_{\Omega_R} |\nabla u|^q \, dy \right)^{2/q},
$$
where $N=N(d,\beta)$.
Now the inequality \eqref{eq110415_01} follows because 
$$
\int_{\Omega_r}|u-(u)_{\Omega_r}|^2 \, dx
\le \int_{\Omega_r}|u - C|^2 \,dx
$$
for any constant $C$.
The other inequality follows similarly. The lemma is proved.
\end{proof}

Theorem \ref{thm1} is proved by the following proposition combined with Proposition V.1.1 in \cite{Gi83}. 
Also see the proof of Theorem 3.6 in \cite{DK10}.

\begin{proposition}
								\label{prop2}
Let $R \le R_0$, $u\in W^1_2(\Omega)$ be a weak solution to \eqref{eq3.24},
$f\in L_2(\Omega)$, and $g\in L_{\frac{\gamma}{\gamma-1}}(\Omega)$.
Then, for any $\Omega_R(x_0)$,
where either $B_R(x_0) \subset \Omega$
or $x_0 \in \partial\Omega$,
we have
\begin{multline*}
\dashint_{\Omega_{\varrho R}(x_0)} (|\nabla u|^2+|u|^\gamma)
\le N \left( \dashint_{\Omega_R(x_0)} |\nabla u|^q + |u|^{\gamma q/2} \right)^{\frac{2}{q}}
+ N \dashint_{\Omega_R(x_0)} \left(|f|^2 + |F|^2 \right)\\
+ N R^{d\gamma\left(\frac{1}{\gamma}-\frac12\right) + \gamma}
\left(\int_{\Omega_R(x_0)} |\nabla u|^2\right)^{\frac{\gamma}{2}-1}
\left(\dashint_{\Omega_R(x_0)} |\nabla u|^2\right),
\end{multline*}
where $\varrho = \frac{1}{4(1+\beta^2)} \in (0,1)$, 
$q = \frac{2d}{d+2}$,
$F = |g|^{\frac12\frac{\gamma}{\gamma-1}}$, and
$N = N(d,\mu,\mu_1,\mu_2,\gamma,\beta)$.
\end{proposition}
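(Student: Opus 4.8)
The plan is to establish a Caccioppoli-type inequality on $\Omega_R(x_0)$ and then combine it with the Poincar\'e--Sobolev inequalities from Lemma~\ref{lem110417} (in the boundary case) or the analogous interior estimates stated just before it (when $B_R(x_0)\subset\Omega$). First I would fix a smooth cutoff $\zeta$ with $\zeta\equiv 1$ on $B_{\varrho R}(x_0)$, $\zeta\equiv 0$ outside $B_{R/(2(1+\beta^2))}(x_0)$, and $|\nabla\zeta|\le N/R$, and test the weak formulation with $\phi=\zeta^2(u-c)$, where $c=(u)_{\Omega_{R/(2(1+\beta^2))}(x_0)}$. Note that the conormal boundary condition is precisely what makes $\phi\in W_2^1(\Omega)$ admissible without requiring $\phi$ to vanish on $\partial\Omega$, so the boundary term drops out. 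Expanding, the leading term gives $\int A_{ij}D_ju\, D_i(\zeta^2(u-c))$; using ellipticity \eqref{eq110409_02} on the $\zeta^2|\nabla u|^2$ part and Young's inequality on the cross term $2\zeta\zeta_{x_i}(u-c)A_{ij}D_ju$ yields a lower bound $\tfrac\mu2\int\zeta^2|\nabla u|^2 - N R^{-2}\int_{\Omega_R}|u-c|^2$.

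Next I would absorb the lower-order contributions. For the $a_i$ term I would use $|a_i|\le\mu_1(|u|^{\gamma/2}+f)$, split $|u|^{\gamma/2}\le|u-c|^{\gamma/2}+|c|^{\gamma/2}$, and bound via Young's inequality, with the $\zeta^2|\nabla u|^2$ piece absorbed into the left side; this produces terms $\int_{\Omega_R}(|u-c|^\gamma + |f|^2)$ plus a term involving $|c|^\gamma |\Omega_R|$ which by the definition of $c$ and Jensen is controlled by $\dashint_{\Omega_R}|u|^\gamma$ — but we want the right-hand side expressed through $\nabla u$, so instead I keep $|u-c|^\gamma$ and apply the second Poincar\'e inequality of Lemma~\ref{lem110417}. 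For the $b$ term, $|b(u-c)\zeta^2|\le\mu_2(|\nabla u|^{2(1-1/\gamma)}+|u|^{\gamma-1}+g)\zeta^2|u-c|$: the first piece needs Young with exponents $\gamma/(\gamma-1)$ and $\gamma$ to get $\varepsilon\zeta^2|\nabla u|^2 + N\zeta^2|u-c|^\gamma$; the $|u|^{\gamma-1}|u-c|$ piece splits into $|u-c|^\gamma$ and $|c|^{\gamma-1}|u-c|$ (the latter handled by Young into $|u-c|^\gamma$ plus a constant times $|c|^\gamma$, again absorbed); the $g|u-c|$ piece is handled by writing $g = F^{2(\gamma-1)/\gamma}$ with $F=|g|^{\gamma/(2(\gamma-1))}$ and applying Young with exponents $\gamma$ and $\gamma/(\gamma-1)$ to obtain $N|F|^2 + N|u-c|^\gamma$.

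After collecting, I arrive at
\[
\int_{\Omega_{\varrho R}}(|\nabla u|^2+|u|^\gamma) \le \frac{N}{R^2}\int_{\Omega_R}|u-c|^2 + N\int_{\Omega_R}(|u-c|^\gamma + |f|^2 + |F|^2),
\]
where I also need to control $\int_{\Omega_{\varrho R}}|u|^\gamma$ on the left: since $|u|^\gamma\le N(|u-c|^\gamma+|c|^\gamma)$ and $\int_{\Omega_{\varrho R}}|c|^\gamma \le |\Omega_{\varrho R}|\,|c|^\gamma$, this last quantity is bounded using Jensen by $N\int_{\Omega_R}|u|^\gamma$, which can be reabsorbed or, more cleanly, one works with $|u-c|^\gamma$ throughout and adds $|c|^\gamma|\Omega_R|\le N\int_{\Omega_R}|u|^\gamma$ at the end — I'd follow the convention of the cited proof in \cite{DK10}. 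Finally I substitute the two Poincar\'e inequalities: $R^{-2}\int_{\Omega_R}|u-c|^2\le N R^{d-2d/q}(\int_{\Omega_R}|\nabla u|^q)^{2/q}$ and $\int_{\Omega_R}|u-c|^\gamma\le N R^{d+\gamma-\gamma d/2}(\int_{\Omega_R}|\nabla u|^2)^{\gamma/2}$; dividing through by $|\Omega_{\varrho R}|\sim R^d$ converts everything to averages and produces exactly the three claimed terms, the last one being $R^{d\gamma(1/\gamma-1/2)+\gamma}(\int_{\Omega_R}|\nabla u|^2)^{\gamma/2-1}\dashint_{\Omega_R}|\nabla u|^2$ after writing $(\int|\nabla u|^2)^{\gamma/2} = (\int|\nabla u|^2)^{\gamma/2-1}\int|\nabla u|^2$ and pulling one power of $R^{-d}$ inside.

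The main obstacle I anticipate is the bookkeeping of the scaling exponents of $R$ so that the final inequality has precisely the stated homogeneity, together with making sure every application of Young's inequality has a small constant $\varepsilon$ that can legitimately be absorbed into $\tfrac\mu2\int\zeta^2|\nabla u|^2$ on the left — this requires the cross terms and the $|\nabla u|^{2(1-1/\gamma)}$ term to be handled simultaneously before absorbing. The use of BMO-small or continuity assumptions on $A_{ij}$ plays no role here (this is merely a Caccioppoli estimate using boundedness and ellipticity), and the reverse H\"older inequality itself then follows by feeding this into Proposition~V.1.1 of \cite{Gi83} (Giaquinta's lemma), exactly as in the proof of Theorem~3.6 of \cite{DK10}.
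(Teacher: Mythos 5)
Your overall approach matches the paper's: test with $\phi = \zeta^2(u-c)$ where $c$ is the average over the intermediate ball, use ellipticity and Young's inequality to absorb the $\zeta^2|\nabla u|^2$ terms, and then invoke the Poincar\'e inequalities of Lemma~\ref{lem110417} (or their interior counterparts) to convert $\int|u-c|^2$ and $\int|u-c|^\gamma$ into the stated gradient terms. That framework is correct.

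The gap is in your treatment of the $|c|^\gamma$ contribution. After Young's inequality, a term $\int_{\Omega_r}|c|^\gamma$ survives (whether it appears through your split $|u|^{\gamma/2}\le|u-c|^{\gamma/2}+|c|^{\gamma/2}$ inside the $a_i$ and $b$ estimates or, as in the paper, through $\int|u|^\gamma\eta^2\le N\int|u-c|^\gamma + N\int|c|^\gamma$). You propose two ways to handle it, and neither gives the stated result. First, bounding $\int_{\Omega_r}|c|^\gamma\le N\int_{\Omega_R}|u|^\gamma$ via Jensen and ``reabsorbing'' is not possible: the left side of the final inequality is an integral over $\Omega_{\varrho R}$, but this $|u|^\gamma$ term lives on the strictly larger set $\Omega_R$, so there is no absorption. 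Second, simply adding $\dashint_{\Omega_R}|u|^\gamma$ to the right side does not match the proposition and, crucially, would destroy the whole point of the estimate: Giaquinta's self-improving lemma (Proposition~V.1.1 of \cite{Gi83}) needs a \emph{sub-critical} power of $|u|$ on the larger ball, namely $\bigl(\dashint_{\Omega_R}|u|^{\gamma q/2}\bigr)^{2/q}$ with $\gamma q/2<\gamma$, not $\dashint_{\Omega_R}|u|^\gamma$ itself. The paper's step, which you omit, is the sharper Jensen inequality
\[
\int_{\Omega_r}|c|^\gamma \le N R^d\Bigl(\dashint_{\Omega_r}|u|\,dx\Bigr)^\gamma
\le N R^d\Bigl(\dashint_{\Omega_R}|u|^{\gamma q/2}\,dx\Bigr)^{2/q},
\]
using $\gamma q/2\ge 1$ and the measure comparison $|\Omega_r|\ge|B_{r_1}^+|$ with $r_1=R(2(1+\beta^2))^{-3/2}$. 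This is exactly what produces the $\bigl(\dashint_{\Omega_R}|u|^{\gamma q/2}\bigr)^{2/q}$ term in the proposition, and it is not a formality: without it the reverse H\"older mechanism (and hence Theorem~\ref{thm1}) does not get off the ground. Everything else in your proposal is fine, so the fix is local: replace the $|c|^\gamma$ handling with the inequality above.
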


Note that $R^{d\gamma\left(\frac{1}{\gamma}-\frac12\right) + \gamma} = 1$ if $d > 2$
and $R^{d\gamma\left(\frac{1}{\gamma}-\frac12\right) + \gamma} = R^2$ if $d=2$.

\begin{proof}
We only show the case $x_0 \in \partial\Omega$.
The other case follows the same lines.	
Let $\eta_0\in C_0^\infty(\bR^d)$ be a function satisfying $0 \le \eta_0 \le 1$ and 
$$
\eta_0 =
\left\{
\begin{aligned}
1 \quad \text{for} \quad |x| \le \frac{1}{4(1+\beta^2)},\\
0 \quad \text{for} \quad |x| \ge \frac{1}{2(1+\beta^2)}.
\end{aligned} 
\right.
$$
Set  $r = \frac{R}{2(1+\beta^2)}$,
$$
c = (u)_{\Omega_r(x_0)} = \dashint_{\Omega_r(x_0)} u \, dx,
\quad
\text{and}
\quad
\eta=\eta_0(R^{-1}(\cdot-x_0)).
$$
Using a test function $(u-c)\eta^2$,
we have
$$	
\int_{\Omega_R(x_0)} A_{ij}(x,u) D_i\left[(u-c) \eta^2\right] D_j u
$$
$$
+ \int_{\Omega_R(x_0)} a_i(x,u) D_i\left[(u-c) \eta^2\right]
= \int_{\Omega_R(x_0)} b(x,u,\nabla) (u-c) \eta^2.
$$
That is,
\begin{multline}
								\label{eq110418_01}
\int_{\Omega_R(x_0)} A_{ij} \eta (D_i u) \eta (D_j u)
= -\int_{\Omega_R(x_0)} 2 A_{ij} (u-c) \eta (D_i \eta) (D_j u)
\\
-\int_{\Omega_R(x_0)} a_i D_i\left[(u-c) \eta^2\right]
+ \int_{\Omega_R(x_0)} b (u-c) \eta^2
=: J_1 + J_2 + J_3.	
\end{multline}
We estimate $J_1$, $J_2$ and $J_3$ by using Young's inequality
and the conditions on $A_{ij}$, $a_i$, and $b$.

Estimate of $J_1$:
$$
J_1 \le 2\mu^{-1} \int_{\Omega_R(x_0)} |\nabla u| |u-c| \eta |\nabla  \eta|
\le \frac \mu {16} \int_{\Omega_R(x_0)} \eta^2 |\nabla u|^2
+ N \int_{\Omega_R(x_0)} |u-c|^2 |\nabla \eta|^2.
$$

Estimate of $J_2$:
\begin{align*}
J_2 &\le \mu_1 \int_{\Omega_R(x_0)} |u|^{\gamma/2} |\nabla u| \eta^2
+ \mu_1 \int_{\Omega_R(x_0)} |f| |\nabla u| \eta^2\\
&\quad +  2\mu_1 \int_{\Omega_R(x_0)} |u|^{\gamma/2} |u-c| |\nabla \eta| \eta
+ 2\mu_1 \int_{\Omega_R(x_0)} |f| |u-c| |\nabla \eta| \eta\\
&\le \frac \mu {16} \int_{\Omega_R(x_0)}|\nabla u|^2  \eta^2 +
N \int_{\Omega_R(x_0)} |u|^{\gamma} \eta^2
+ N \int_{\Omega_R(x_0)} |f|^2 \eta^2
+ N \int_{\Omega_R(x_0)} |u-c|^2 |\nabla \eta|^2.
\end{align*}

Estimate of $J_3$:
\begin{align*}
J_3 &\le \mu_2 \int_{\Omega_R(x_0)} |\nabla u|^{2(1-1/\gamma)} |u-c| \eta^2
+ \mu_2 \int_{\Omega_R(x_0)} |u|^{\gamma-1} |u-c| \eta^2
+ \mu_2 \int_{\Omega_R(x_0)}|g| |u-c| \eta^2\\
&\le \frac \mu {16} \int_{\Omega_R(x_0)} |\nabla u|^{2} \eta^2
+ N \int_{\Omega_R(x_0)} |u-c|^{\gamma} \eta^2
+ N \int_{\Omega_R(x_0)} |u|^{\gamma} \eta^2
+ N \int_{\Omega_R(x_0)}|g|^{\frac \gamma {\gamma-1}} \eta^2.
\end{align*}
From these estimates of $J_i$, $i=1,2,3$, and the inequality \eqref{eq110418_01} along with the ellipticity condition in \eqref{eq110409_02} we have
$$
\int_{\Omega_R(x_0)} |\nabla u|^2 \eta^2
\le N \int_{\Omega_R(x_0)} |u-c|^2 |\nabla \eta|^2
+ N \int_{\Omega_R(x_0)} |u-c|^\gamma \eta^2
$$
$$
+ N \int_{\Omega_R(x_0)} |u|^\gamma \eta^2
+ N \int_{\Omega_R(x_0)} \left(|f|^2+|g|^{\frac \gamma {\gamma-1}}\right) \eta^2
:= N(I_1 + I_2 + I_3 + I_4),
$$
where $N=N(\mu,\mu_1,\mu_2)$. 
Now we get estimates for $I_1$, $I_2$, and $I_3$ as follows.

Estimate of $I_1$: Recall the definition of $\eta$, $r = \frac{R}{2(1+\beta^2)}$,
and $q = \frac{2d}{d+2}$.
Then by Lemma \ref{lem110417},
$$
I_1 \le N R^{-2}\int_{\Omega_r(x_0)} |u-c|^2 \le
N R^d
\left(\dashint_{\Omega_R(x_0)} |\nabla u|^q\right)^{2/q}.
$$

Estimate of $I_2$: Again by Lemma \ref{lem110417},
$$
I_2 \le \int_{\Omega_r(x_0)} |u-c|^\gamma
\le N R^{d+\gamma - \gamma d/2}\left(\int_{\Omega_R(x_0)} |\nabla u|^2 \right)^{\gamma/2}.
$$

Estimate of $I_3$: First note that
$$
\int_{\Omega_r(x_0)} |c|^{\gamma} \, dx
\le N R^d \left(\dashint_{\Omega_r(x_0)} |u| \, dx\right)^\gamma
\le N R^d \left(\dashint_{\Omega_R(x_0)} |u|^{\gamma q/2} \, dx\right)^{2/q},
$$
where, in the last inequality,
we have used the fact that
$|\Omega_r(x_0)| \ge |B_{r_1}^+|$, $r_1 = R \left(2(1+\beta^2)\right)^{-3/2}$. 
Hence
\begin{multline}
                                    \label{eq13.33}	
I_3 \le N \int_{\Omega_r(x_0)} |u-c|^\gamma
+ N \int_{\Omega_r(x_0)} |c|^{\gamma}
\\
\le N R^{d+\gamma - \gamma d/2}\left(\int_{\Omega_R(x_0)} |\nabla u|^2 \right)^{\gamma/2} 
+ R^d \left(\dashint_{\Omega_R(x_0)} |u|^{\gamma q/2} \, dx\right)^{2/q}.
\end{multline}
Therefore,
$$
\int_{\Omega_R(x_0)} |\nabla u|^2 \eta^2
\le N R^d \left( \dashint_{\Omega_R(x_0)} |\nabla u|^q \right)^{\frac{2}{q}}
$$
$$
+ N R^d \left( \dashint_{\Omega_R(x_0)} |u|^{\gamma q/2} \right)^{\frac{2}{q}}
+ N \int_{\Omega_R(x_0)} \left(|f|^2 + |F|^2 \right)
$$
$$
\quad + N R^{d\gamma\left(\frac{1}{\gamma}-\frac12\right) + \gamma}
\left(\int_{\Omega_R(x_0)} |\nabla u|^2\right)^{\frac{\gamma}{2}-1}
\left(\int_{\Omega_R(x_0)} |\nabla u|^2\right),
$$
where $N=N(d,\mu,\mu_1,\mu_2,\gamma,\beta)$ and
$$
F =
|g|^{\frac12\frac{\gamma}{\gamma-1}}.
$$
Finally, we obtain the desired inequality in the proposition
by adding the $I_3$ term to the above inequality, using \eqref{eq13.33}, and diving all terms by $R^d$.
\end{proof}

\section*{Acknowledgement}
I would like to thank Hongjie Dong for initiating the writing of this paper,
and Dian K. Palagachev for informing me of his recent paper \cite{PalSof11}.

\bibliographystyle{plain}

\def\cprime{$'$}\def\cprime{$'$} \def\cprime{$'$} \def\cprime{$'$}
  \def\cprime{$'$} \def\cprime{$'$}

\end{document}